\newtheorem{theorem}{Theorem}[section]
\newtheorem{lemma}[theorem]{Lemma}
\newtheorem{proposition}[theorem]{Proposition}
\newtheorem{corollary}[theorem]{Corollary}
\theoremstyle{definition}
\theoremstyle{remark}
\newtheorem{remark}[theorem]{Remark}
\numberwithin{equation}{section}
\newcommand{\e}{\epsilon}
\newcommand{\cO}{\mathcal O}
\newcommand{\field}[1]{\mathbb{#1}}
\newcommand{\R}{\field{R}}
\newcommand{\N}{\field{N}}
\providecommand{\abs}[1]{\lvert#1\rvert}
\providecommand{\Abs}[1]{\Bigl\lvert #1 \Bigr\rvert}
\providecommand{\norm}[1]{\lVert#1\rVert}
\newcommand{\q}{\mathbb{Q}}
\newcommand{\z}{\mathbb{Z}}
\renewcommand{\q}{\mathbb{Q}}
\renewcommand{\c}{\mathbb{C}}
\newcommand{\br}{\mathbb{R}}
\newcommand{\cS}{\mathcal S}
\newcommand{\la}{\langle}
\newcommand{\ra}{\rangle}
\newcommand{\SL}{\operatorname{SL}}
\newcommand{\bp}{\begin{pmatrix}}
\newcommand{\ep}{\end{pmatrix}}
\newcommand{\inv}{^{-1}}
\newcommand{\SO}{\operatorname{SO}}
\newenvironment{smallpmatrix}{\left(\begin{smallmatrix}}{\end{smallmatrix}\right)}
\newcommand{\vol}{\operatorname{vol}}
\newcommand{\PSL}{\op{PSL}}
\newcommand{\bi}{\begin{itemize}}
\newcommand{\bs}{\backslash}
\newcommand{\ei}{\end{itemize}}
\newcommand{\op}{\operatorname}
\renewcommand{\setminus}{\smallsetminus}
\newcommand{\cl}[1]{\overline{#1}}
\newcommand{\ba}{\backslash}
\newcommand{\cB}{\mathcal{B}}
\newcommand{\cK}{\mathcal{K}}
\newcommand{\cW}{\mathcal{W}}
\newcommand{\G}{\Gamma}
\begin{document}

\title[Limits of translates of divergent geodesics]{Limits of translates of divergent geodesics and Integral points on one-sheeted hyperboloids}

\author{Hee Oh}

\address{Mathematics department, Brown university, Providence, RI, U.S.A
and Korea Institute for Advanced Study, Seoul, Korea} 
\email{heeoh@math.brown.edu}
\thanks{Oh was partially supported by NSF
   Grant \#0629322.}

\author{Nimish A. Shah}
\address{Department of Mathematics,
The Ohio State University, Columbus, OH 43210, U.S.A}
\email{shah@math.osu.edu}


\subjclass[2010]{11N45, 37A17, 22E40 (Primary)}
\keywords{Counting lattice points, mixing of geodesic flow, decay of matrix coefficients}

 \thanks{Shah was partially supported by NSF Grant \#1001654.}

\begin{abstract} 
For any non-uniform lattice $\G$ in $\SL_2(\br)$,
 we describe the limit distribution of orthogonal translates of a {\it divergent} geodesic
in $\G\ba \SL_2(\br)$. As an application, for a quadratic form
$Q$ of signature $(2,1)$, a lattice $\G$ in its isometry group, and $v_0\in \br^3$ with $Q(v_0)>0$,
 we compute the asymptotic (with a logarithmic error term) of the number
of points in a discrete orbit $v_0\G$ of norm at most $T$, when the stabilizer of $v_0$ in $\G$
is finite. Our result in particular implies that for any non-zero integer $d$, the smoothed count
for number of
integral binary quadratic forms with discriminant $d^2$ and with
coefficients bounded by $T$ is asymptotic to $c\cdot  T \log T +O(T)$.
\end{abstract}

\maketitle

\section{Introduction}

\subsection{Motivation} Let $Q\in \z[x_1, \cdots, x_n]$ be a homogeneous polynomial and
set $V_m:=\{x\in \br^n: Q(x)=m\}$ for an integer $m$.
It is a fundamental problem to understand the set
$V_m(\z)=\{x\in \z^n: Q(x)=m\}$ of integral solutions.

In particular, we are interested in the asymptotic
of the number $N(T):=\#\{x\in V_m(\z): \|x\|<T\}$ as $T\to \infty$,
where $\|\cdot \|$ is a fixed norm on $\br^n$.

The answer to this question depends quite heavily on the geometry of the ambient space $V_m$.
We suppose that the variety $V_m$ is homogeneous, i.e., there exist a connected semisimple real
algebraic group $G$ defined over $\q$ and a $\q$-rational representation $\iota: G\to \SL_n$ such that
$V_m=v_0. \iota(G)$ for some non-zero $v_0\in \q^n$.

Let $\G<G(\q)$ be an arithmetic subgroup preserving $V_m(\z)$. 
By a theorem of Borel and Harish-Chandra \cite{BH}, the co-volume of $\G$ in $G$ is finite and
there are only finitely many $\G$-orbits in $V_m(\z)$. Hence
understanding the asymptotic of $N(T)$ is reduced to the orbital counting problem on 
$\#(v_0\Gamma \cap B_T)$ for $B_T=\{x\in V_m: \|x\|<T\}$ and $v_0\in V_m(\z)$.

\begin{theorem} \label{drs} Set $H$ to be the stabilizer subgroup of
$v_0$ in $G$. Suppose that $H$ is either a symmetric subgroup or a maximal $\q$-subgroup of $G$.
If the volume
of $(H\cap \G)\ba H$ is finite, i.e., if $H\cap \G$ is a lattice in $H$, we have  
\begin{equation*}
 \#(v_0\Gamma \cap B_T) \sim \frac{\op{vol}_{H}(H\cap \G
\ba H)}{\op{vol}_G(\G\ba G)}\op{vol}_{H\ba G} (B_T) \end{equation*} where the volumes on
$H, G$ and $v_0G\simeq H\ba G$ are computed with respect to
invariant measures chosen compatibly; that is, $d\op{vol}_G=d\op{vol}_H\times d\op{vol}_{H\ba G}$ locally.
\end{theorem}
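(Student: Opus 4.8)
The plan is to follow the now-classical strategy of Duke--Rudnick--Sarnak and Eskin--McMullen, converting the lattice-point count into an equidistribution statement on the homogeneous space $\G\ba G$ and then invoking mixing of the appropriate flow. First I would fix Haar measures $d\op{vol}_G$ on $G$ and $d\op{vol}_H$ on $H$, let $dg$ denote the induced $G$-invariant measure on $H\ba G\simeq v_0G$, and define for each $T$ the counting function $F_T$ on $\G\ba G$ by
\[
 F_T(\G g)=\sum_{\gamma\in (H\cap\G)\ba \G}\chi_{B_T}(v_0\gamma g),
\]
where $\chi_{B_T}$ is the indicator of $B_T$. Since $H\cap\G$ is a lattice in $H$, this is a finite sum and $F_T\in L^1(\G\ba G)$; a direct unfolding computation gives $\int_{\G\ba G}F_T\,dg=\op{vol}_H(H\cap\G\ba H)\cdot \op{vol}_{H\ba G}(B_T)$. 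The quantity we want, $\#(v_0\G\cap B_T)$, is essentially $F_T$ evaluated near the identity coset, so the theorem follows once we show $F_T/\op{vol}_{H\ba G}(B_T)$ converges, after suitable averaging, to the constant $\op{vol}_H(H\cap\G\ba H)/\op{vol}_G(\G\ba G)$.

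The key step is the equidistribution of the translated ``$H$-packets.'' Concretely, one shows that for a fixed bounded open set, the pushforward of the $H$-orbit measure on $\G\ba G$ under translation by $g_T$ (where $g_T$ ranges over elements pushing mass out to the boundary of $B_T$) becomes equidistributed with respect to $d\op{vol}_G/\op{vol}_G(\G\ba G)$. When $H$ is a symmetric subgroup this is exactly the Eskin--McMullen argument: the orbit $Hg$ carries a finite invariant measure, and using a Cartan-type decomposition $G=K A^+ H$ (or the relevant $KAK$-type decomposition adapted to the symmetric pair) together with the decay of matrix coefficients --- i.e., mixing --- one proves that $\langle g_T\cdot \mu_H, \varphi\rangle \to \frac{1}{\op{vol}_G(\G\ba G)}\int \varphi\,d\op{vol}_G$ for $\varphi\in C_c(\G\ba G)$. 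When $H$ is instead a maximal $\q$-subgroup, $H$ is semisimple (a maximal proper $\q$-subgroup that is not parabolic must be reductive, and semisimplicity follows from maximality), so $H\cap\G$ being a lattice forces the closed orbit $(H\cap\G)\ba H$ to have finite volume and one may apply Ratner's theorem (or Mozes--Shah) to the sequence of translated orbit measures: any weak-$*$ limit is algebraic, and non-escape of mass plus the maximality of $H$ in $G$ rules out every intermediate invariant measure, leaving only the Haar measure on $\G\ba G$. In both cases the output is the same pointwise-in-the-limit equidistribution statement.

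The remaining work is to transfer this measure convergence to the convergence of $F_T$. Here I would use the ``wavefront'' / smoothing technique: choose a bump function $\psi_\e$ supported on a small neighborhood of $e$ in $G$ with $\int\psi_\e\,d\op{vol}_G=1$, form the smoothed count $\int_{\G\ba G}F_T(\G g)\,\psi_\e(g)\,d\op{vol}_G(g)$, and unfold it into an integral over $H\ba G$ of $\op{vol}_H$-averages of $\psi_\e$-translates against the equidistributing orbit measures. The regularity of the boundary $\partial B_T$ (well-roundedness: $\op{vol}_{H\ba G}(\partial_\e B_T)/\op{vol}_{H\ba G}(B_T)\to 0$ as $\e\to 0$ uniformly in $T$), which holds because $B_T$ is defined by a norm on the affine variety $V_m$, lets us sandwich $F_T$ between smoothed versions and conclude $F_T(\G g)/\op{vol}_{H\ba G}(B_T)\to \op{vol}_H(H\cap\G\ba H)/\op{vol}_G(\G\ba G)$ uniformly on compact sets, in particular at the identity, which is exactly the assertion.

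The main obstacle is the non-compactness of the situation: both $\G\ba G$ and the orbit $(H\cap\G)\ba H$ are typically non-compact, so one must control escape of mass, both in the equidistribution of translated orbits (the Dani--Margulis / Eskin--Mozes--Shah type non-divergence estimates, or in the symmetric case the quantitative mixing plus a tail bound on the $A^+$-integral) and in the smoothing step (ensuring the unfolded integrals do not lose mass at the cusp). In the symmetric case this is handled by the explicit structure of the symmetric space and decay of matrix coefficients; in the maximal-$\q$-subgroup case it is handled by Ratner-theoretic non-divergence. This is precisely where the hypotheses that $H$ is symmetric or maximal, and that $H\cap\G$ is a lattice in $H$, are essential --- without them the limiting measure need not be Haar, and indeed the main body of the present paper is devoted to exactly the degenerate case where $H\cap\G$ fails to be a lattice in $H$.
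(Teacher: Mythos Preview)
The paper does not prove this theorem; it is quoted as background and immediately attributed to Duke--Rudnick--Sarnak and Eskin--McMullen (for $H$ symmetric) and to Eskin--Mozes--Shah (for $H$ a maximal $\q$-subgroup). There is therefore no ``paper's own proof'' to compare against.

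That said, your sketch is a faithful outline of precisely those cited arguments: the counting function $F_T$ on $\G\ba G$, the unfolding identity $\int F_T = \op{vol}_H(H\cap\G\ba H)\cdot\op{vol}_{H\ba G}(B_T)$, equidistribution of translated $H$-orbits (via the wavefront lemma and mixing in the symmetric case, via Ratner/Mozes--Shah plus non-divergence in the maximal-subgroup case), and the smoothing/well-roundedness passage to recover $F_T(e)$. One small inaccuracy: a maximal $\q$-subgroup need not be semisimple (it is reductive, possibly with a nontrivial center), and the Eskin--Mozes--Shah argument does not require semisimplicity of $H$ but rather uses the unipotent elements generated after translation together with the linearization technique; otherwise your description is sound.
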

This theorem was first proved by Duke, Rudnick, Sarnak \cite{DukeRudnickSarnak1993}
 when $H$ is symmetric and Eskin and McMullen gave a simplified proof  in
\cite{EskinMcMullen1993}. When $H$ is a maximal $\q$-subgroup, it is proved by Eskin, Mozes and Shah in \cite{EskinMozesShah1996}.

As apparent from the main term of the asymptotic, 
it is crucial to assume  $\op{vol}(H\cap \G\ba H)<\infty$ in Theorem \ref{drs}. 
The main aim of this paper is to break this barrier; to investigate the counting problem 
in the case when $\op{vol}(H\cap \G\ba H)=\infty$. 

We focus on the case when $Q$
is a quadratic form of signature $(n-1,1)$ with $n\ge 3$ and $G$
is the special orthogonal group of $Q$. In this situation,
the case of $\op{vol}(H\cap \G \ba H)=\infty$ for $H=\op{Stab}_G(v_0)$ arises only when $n=3$ and $Q(v_0)=m>0$, that is,
when the variety $V_m=\{x\in \br^3: Q(x)=m\}$ is a one-sheeted hyperboloid.
To prove this claim, note first that 
if $H$ is a non-compact simple Lie group, 
 then any closed $\G\ba \G H$ in $\G\ba G$ must be of finite volume 
 by Dani~\cite{Dani1979} and Margulis \cite{MargulisICM1991} (see also \cite{Shah1991}).
 Any non-compact stabilizer $H$ of $v_0\in \br^n$ in $G$ is either locally isomorphic to
$\SO(n-2,1)$ (which is a simple Lie group
except for $n=3$) or a compact extension of a horospherical subgroup.
Since any orbit of a horospherical subgroup is either compact or dense in $\G\ba G$
(cf.~\cite{Dani1986}), it follows that
the case of
$\op{vol}(H\cap \G \ba H)=\infty $
 arises only when $H\simeq \SO(1,1)$; hence
  $n=3$ and $Q(v_0)>0$.

In the next subsection, we state our main theorem in a greater generality, not necessarily
in the arithmetic situation.
\subsection{Counting integral points on a one-sheeted hyperboloid}  \label{sec:1.2}
Let $Q(x_1, x_2, x_3)$ be an real quadratic form of signature $(2,1)$.
Denote by $G$ the identity component of the special orthogonal group $\SO_Q(\br)$.
Let $\G<G$ be a lattice and $v_0\in \br^3$ be such that
 $Q(v_0)>0$ and the orbit $v_0\G$ is discrete.
As before, we fix a norm $\|\cdot \|$ on $\br^3$ and set 
$B_T:=\{x\in v_0G:\|w\|<T\}$.

To present our theorem with a best possible error term, we consider the following
smoothed counting function: fixing a non-negative function $\psi\in C_c^\infty(G)$ with integral one,
let $$\tilde N_T:=\sum_{v\in v_0\G} (\chi_{B_T}* \psi )(v)$$ 
where $\chi_{B_T}*\psi (x)=\int_{G}\chi_{B_T}(x g)\psi(g) \;dg$, $x\in v_0G$, is the convolution of the characteristic function of $B_T$ and $\psi$.
Note that
$\tilde N_T \asymp \# (v_0\G\cap B_T)$ in the sense that their ratio is in between two
 uniform constants for all $T> 1$.

 Denoting by $H\simeq \SO(1,1)^\circ$ the one-dimensional stabilizer subgroup of $v_0$ in $G$,
note that $\op{vol}(H\cap \G\ba H)<\infty$ if and only if $H\cap \G$ is infinite.
In order to state our theorem,
we write $H$ as a one-parameter subgroup $\{h(s):s\in \br\}$ so that the Lebesgue measure
 $ds$ defines a Haar measure on $H$:
$\int_{-\log T}^{\log T} ds=\vol_H(\{h(s):|s|<\log T\})$.

\begin{theorem}\label{main} If the volume
of $(H\cap \G)\ba H$ is infinite,  we have the following:
\begin{enumerate}
 \item As $T\to \infty$,
$$N_T\sim
\frac{\int_{-\log T}^{\log T} ds}{\op{vol}_G(\G\ba G)}\op{vol}_{H\ba G}(B_T) $$
where $d\op{vol}_G=ds\times d\op{vol}_{H\ba G}$ locally.
\item for $T \gg 1$, 
$$ \tilde N_T =c\cdot T\log T +O(T)$$
where $c=\lim_{T\to \infty}\frac{2\op{vol}_{H\ba G}(B_T)}{T\op{vol}_G(\G\ba G)}$. 
\end{enumerate}
 \end{theorem}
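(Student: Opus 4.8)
The plan is to reduce the counting problem to an equidistribution statement about orthogonal translates of the divergent geodesic determined by $H$ in $\G\ba G$, and then to analyze that equidistribution directly. More precisely, write $G = KAN$ and realize $H\ba G$ as a homogeneous space on which $B_T$ becomes, after unfolding, an integral over $H\ba G$ of an indicator-type function; the standard manipulation (as in \cite{EskinMcMullen1993}) turns $\tilde N_T = \sum_{v\in v_0\G}(\chi_{B_T}*\psi)(v)$ into a spectral average of the form $\int_{\G\cap H\ba H}\left(\text{push-forward of }\psi\right)\,$ evaluated along the $H$-orbit, so that controlling $\tilde N_T$ is equivalent to understanding $\int_{|s|<\log T} F(\G h(s)g)\,ds$ for $F\in C_c^\infty(\G\ba G)$ as $T\to\infty$. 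Because $\op{vol}(H\cap\G\ba H)=\infty$, the orbit $\G h(s)$ is a \emph{divergent} geodesic: as $|s|\to\infty$ it leaves every compact set and runs up a cusp. The first step is therefore to prove the companion equidistribution theorem (the one promised in the abstract): suitably normalized, the measures $\frac{1}{2\log T}\int_{|s|<\log T}\delta_{\G h(s)g}\,ds$ converge as $T\to\infty$ to the normalized Haar (Liouville) measure $\frac{1}{\op{vol}_G(\G\ba G)}\,d\op{vol}_G$ on $\G\ba G$.

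To establish that equidistribution I would use mixing of the geodesic flow together with a ``thickening'' argument. Translating the divergent geodesic orthogonally means applying the flow $a_t$ (or the opposite horocyclic direction) to the curve $\{\G h(s)g : |s|<\log T\}$, and the key point is that a long piece of a divergent geodesic of length $\sim 2\log T$, when pushed by the flow, spends only a negligible proportion of its time deep in the cusp — the rate at which a divergent geodesic escapes is governed by the injectivity radius decaying like $e^{-|s|}$, and the reparametrization $|s|<\log T$ is precisely calibrated so that the excursion into the cusp of depth up to $\log T$ contributes a bounded (hence, after dividing by $2\log T$, vanishing) amount. Concretely, split the $s$-integral into the compact part $|s|<R$ and the cuspidal tail $R\le |s|<\log T$; on the compact part apply mixing of $\{a_t\}$ on $L^2(\G\ba G)$ (equivalently, decay of matrix coefficients for $\SL_2(\br)$) to see the translates equidistribute toward Haar; on the tail use a uniform bound (via a reduction-theory / Siegel-domain estimate in the cusp) showing its total mass is $O(1)$, uniformly in $T$. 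Letting $R\to\infty$ after $T\to\infty$ gives part (1).

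For part (2), the sharpening to $\tilde N_T = c\cdot T\log T + O(T)$ requires an \emph{effective} version of the above. The convolution with $\psi\in C_c^\infty(G)$ is what makes effectivity feasible: it smooths the boundary of $B_T$ so that one can take $F$ in the spectral average to be smooth, and then invoke the \emph{effective} decay of matrix coefficients — for $\SL_2(\br)$ the matrix coefficients $\langle a_t F_1, F_2\rangle$ decay like $e^{-t/2}$ up to Sobolev norms, or like $e^{-t(1-2s_0)}$ if there is an exceptional eigenvalue with $s_0<1/2$. The growth rate $\op{vol}_{H\ba G}(B_T)\sim \frac{c}{2}\,T\,\op{vol}_G(\G\ba G)$ (for a signature $(2,1)$ form this is a classical computation: the hyperboloid $Q=m$ with the norm ball gives volume growth linear in $T$) combined with the factor $2\log T$ from $\int_{|s|<\log T}ds$ yields the main term $c\cdot T\log T$; the point is that \emph{both} the error in the equidistribution on the compact part (exponentially small in $R$, hence a genuine power saving once $R$ is chosen comparable to $\log T$) \emph{and} the cuspidal-tail contribution ($O(1)$ per unit of volume-density, hence $O(T)$ in total) are of size $O(T)$, which is exactly the claimed error. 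The main obstacle, and the place where the real work lies, is obtaining the \emph{uniform} bound on the cuspidal tail with the right dependence on $T$: one must show that the divergent geodesic's excursions into the finitely many cusps of $\G\ba G$, integrated against the smoothed $B_T$-density, contribute only $O(T)$ and not $O(T\log T)$ — this demands a careful analysis of the geometry of $v_0G\simeq H\ba G$ near the cusps, using the explicit structure of $\SO(2,1)$, the relation between the cusp of $\G$ and the divergence of $H\cap\G\ba H$, and a precise accounting (via reduction theory) of how the norm function $\|\cdot\|$ on the hyperboloid grows along the escaping directions, so that the potentially divergent $\int_R^{\log T}(\cdots)\,ds$ is in fact bounded by a convergent integral.
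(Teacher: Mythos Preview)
Your high-level reduction to an equidistribution statement is correct, and the unfolding you describe does lead to the integral $\int_{\R}\Psi(x_0h(s)a(t))\,ds$ (for each fixed $t$, with $t$ ranging up to $\sim\log T$ coming from the norm cutoff). The gap is in how you propose to prove the equidistribution itself. Your scheme is to split $\{|s|<R\}$ versus $\{R\le|s|\le |t|+M_1\}$, apply mixing on the compact piece, and argue the tail is $O(1)$. But the tail is \emph{not} $O(1)$: for $s$ in $[R,|t|]$ the point $x_0h(s)a(t)$ typically lies in the compact part of $\G\ba G$ (indeed, $\|p\,h(s)a(t)\|\asymp e^{(t-s)/2}$ is large there), so the tail carries mass $\sim 2(|t|-R)\mu(\Psi)$, which is the main term, not a remainder. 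Consequently, letting $R\to\infty$ after $T\to\infty$ gives nothing, and taking $R\sim\log T$ forces you to control equidistribution on a piece of length $\sim|t|$ whose endpoints have injectivity radius $\sim e^{-|t|}$---so a uniform Eskin--McMullen thickening is impossible and the argument collapses.

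What the paper actually does is quite different and is the missing idea: partition $[0,|t|]$ into intervals $[S_i,S_{i+1}]$ with $S_i=\log(2r_0 i+1)$ (so $\delta_i:=S_{i+1}-S_i\asymp e^{-S_i}$), and on each interval use the algebraic identity
\[
x_0\,h([S_i,S_{i+1}])\,a(t)=x_i\cdot\bigl(a(-S_i)h([0,\delta_i])a(S_i)\bigr)\cdot a(t-S_i),\qquad x_i:=x_0h(S_i)a(S_i).
\]
The conjugated curve $a(-S_i)h(\cdot)a(S_i)$ is, to first order, the expanding horocycle $n_+(e^{S_i}\cdot/2)$, and $e^{S_i}\delta_i\asymp r_0$ is a bounded length; moreover $x_i$ lies in a \emph{fixed} compact set $\cK_{\eta_0/\sqrt2}$ independent of $i$. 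One then applies effective mixing (the paper's Theorem on uniform mixing over compacta) to each such horocycle piece translated by $a(t-S_i)$, getting a contribution $\delta_i\,\mu(\Psi)+O(e^{-\theta_0(t-S_i)}e^{-S_i})$. Summing over $i$ gives $|t|\,\mu(\Psi)+O(1)$, because $\sum_i e^{-S_i}e^{-\theta_0(t-S_i)}=e^{-\theta_0 t}\sum_i e^{(\theta_0-1)S_i}=O(1)$. This renormalization---matching the shrinking injectivity radius along the divergent geodesic to a growing conjugation, so that every piece becomes a bounded horocycle arc based at a point of bounded height---is the essential mechanism, and it is absent from your proposal.
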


We note that when $\op{vol}(H\cap \G\ba H)<\infty $, $\tilde N_T =c\cdot T + O(T^{\alpha})$
for $0<\alpha <1$  is obtained in  \cite{DukeRudnickSarnak1993}.
We believe, as suggested by Z. Rudnick to us, that
$\tilde N_T = c \cdot T\log T +c' \cdot T +O(T^\alpha) $ for some $c'>0$ and $0<\alpha<1$ and hence
the order of the second term for $\tilde N_T$ cannot be improved.

Theorem \ref{main} can be generalized to the orbital counting for more general representations of $\SL_2(\br)$ (see section
\ref{sec:SL2-gen-rep}).

\begin{remark} In the case when $Q=x_1^2+x_2^2-d^2x_3^2$ for $d\in \z$, $v_0=(1,0,0)$,
and $\G=\SO_Q(\z)$, it was pointed out in \cite{DukeRudnickSarnak1993} 
that an elementary number theoretic computation of \cite{Scourfield1961}
 leads to the asymptotic 
 $$\#\{(x_1, x_2, x_3)\in v_0\Gamma: \sqrt{x_1^2+x_2^2+d^2x_3^2}< T\} =
     c \cdot T  \log T +O(T\log(\log T)) .$$
However this deduction seems to work only for this very special case; for instance, we are not aware of any
other approach than ours which can deal with non-arithmetic situtations.
\end{remark}

\subsection{Arithmetic case and Integral binary quadratic forms}
In the arithmetic case, Theorem \ref{main} together with Theorem \ref{drs} implies the following:
\begin{corollary}\label{arith}
 Let $Q(x_1, x_2, x_3)$ be an integral quadratic form with signature $(2,1)$. Suppose
that for some $v_0\in \z^3$ with $Q(v_0)>0$, the stabilizer subgroup of $v_0$ is isotropic over $\q$.

Then there exists $c=c(\|\cdot \|) >0$ such that as $T\to \infty$,
$$\# \{x\in \z^3: Q(x)=Q(v_0), \|x\|<T\}\sim c\cdot  T \log T .$$
\end{corollary}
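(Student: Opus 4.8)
The plan is to derive Corollary~\ref{arith} by combining the reduction provided by the theorem of Borel and Harish-Chandra with the dichotomy recorded in the introduction, and then applying either Theorem~\ref{drs} or Theorem~\ref{main} to each $\G$-orbit, according to whether the relevant stabilizer is a lattice in $H$ or not. First I would set $G$ to be the identity component of $\SO_Q(\br)$, and let $\G = \SO_Q(\z)\cap G$, which is a lattice in $G$ by \cite{BH}; the hypothesis that the stabilizer of $v_0$ in $G$ is isotropic over $\q$ means precisely that this stabilizer $H$ is $\q$-isomorphic to $\SO(1,1)^\circ$, in particular one-dimensional and non-compact. Since $V_m = \{x : Q(x) = Q(v_0)\}$ with $m = Q(v_0) > 0$ is then exactly the one-sheeted-hyperboloid case discussed after Theorem~\ref{drs}, the orbit $v_0 G$ is a closed subvariety and the set $\{x\in\z^3 : Q(x) = Q(v_0)\}$ is contained in finitely many $G(\br)$-orbits under the full orthogonal group; intersecting with $v_0 G$ and using \cite{BH} again, $V_m(\z)\cap v_0G$ breaks into finitely many $\G$-orbits $v_1\G,\dots,v_r\G$.

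Next I would treat each orbit $v_i\G$ separately. The stabilizer $H_i = \op{Stab}_G(v_i)$ is a conjugate of $H$ over $\br$, hence again $\simeq \SO(1,1)^\circ$; and since $v_i\G$ is discrete, $H_i\cap\G$ is a lattice in $H_i$ if and only if it is infinite. If $H_i\cap\G$ is infinite, Theorem~\ref{drs} (applicable because $H_i$, being $\SO(1,1)^\circ$, is a symmetric subgroup of $G$, or alternatively because the volume is finite) gives $\#(v_i\G\cap B_T)\sim c_i T$ for a positive constant $c_i$ depending on the covolumes and the chosen norm. If $H_i\cap\G$ is infinite-covolume, equivalently finite, Theorem~\ref{main}(1) gives $\#(v_i\G\cap B_T)\sim c_i' T\log T$ with $c_i' = \vol_{H\ba G}(B_T)/(T\vol_G(\G\ba G))$ in the limit, using that $\int_{-\log T}^{\log T} ds = 2\log T$. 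The key point here is that at least one orbit must be of the second, infinite-covolume type: this is forced by the hypothesis, since if $H\cap\G$ itself were infinite then $v_0\G$ would already contribute a $T\log T$-free term but we still need to know the dominant order; in fact the statement of the corollary is about the orbit of the specific $v_0$, so I would simply observe that for $v_0$ itself the arithmetic hypothesis, together with the structure of $\SO_Q(\z)$, forces $H\cap\G$ to have infinite covolume (the one-parameter diagonalizable $\q$-subgroup $H$ has no infinite arithmetic subgroup here unless $Q$ is split in a way incompatible with $Q(v_0)$ being a fixed positive value with finite stabilizer in the relevant sense) — this is exactly the content of the last sentence of the paragraph following Theorem~\ref{drs}, which I would cite.

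Then I would add up the contributions: $\#\{x\in\z^3 : Q(x) = Q(v_0),\ \|x\| < T\} = \sum_{i=1}^r \#(v_i\G\cap B_T)$, and since the orbit $v_0\G$ contributes a term of exact order $T\log T$ while every orbit contributes at most $O(T\log T)$, and no orbit contributes more, the whole sum is asymptotic to $c\cdot T\log T$ where $c$ is the sum of the $T\log T$-coefficients $c_i'$ over those orbits $v_i\G$ for which $H_i\cap\G$ is finite; this $c$ is positive because $v_0\G$ is among them, and it depends only on $Q$ and the norm $\|\cdot\|$. I would note in passing that one may equally phrase the count with the smoothed $\tilde N_T$ via Theorem~\ref{main}(2), which would upgrade the asymptotic to $c\cdot T\log T + O(T)$, but for the bare statement of Corollary~\ref{arith} the unsmoothed asymptotic suffices since $\tilde N_T \asymp \#(v_0\G\cap B_T)$.

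The main obstacle I anticipate is not the counting itself — that is handed to us by Theorems~\ref{drs} and~\ref{main} — but the bookkeeping of which $\G$-orbits in $V_m(\z)$ have an infinite stabilizer intersection and which have a finite one, and in particular verifying that the distinguished orbit $v_0\G$ lands in the finite-stabilizer class so that the $T\log T$ term is genuinely present. This requires unwinding what "the stabilizer subgroup of $v_0$ is isotropic over $\q$" buys us: the stabilizer is a $\q$-torus $H$ which is $\q$-split (one-dimensional, isotropic), and its group of $\z$-points is finite precisely because the orbit $v_0\G$ being discrete together with $H$ $\q$-split forces $H(\z) = H\cap\G$ to be finite — here I would invoke the Dirichlet-unit-type computation for the norm-one elements of the quadratic field or étale algebra cutting out $H$, or more simply the general principle quoted in the excerpt that for such $H$ the orbit is closed and the stabilizer in $\G$ is finite exactly in the infinite-covolume regime. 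Once that is pinned down, the rest is assembling the pieces.
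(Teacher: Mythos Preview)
Your proposal is correct and matches the paper's approach, which simply states the corollary as an immediate consequence of Theorems~\ref{drs} and~\ref{main} (together with the Borel--Harish-Chandra finiteness of $\Gamma$-orbits in $V_m(\z)$) without giving further detail. One simplification for the obstacle you flag at the end: since $\operatorname{disc}(Q|_{v^\perp})\equiv \operatorname{disc}(Q)/Q(v)$ in $\q^\times/(\q^\times)^2$, the $\q$-isotropy of the stabilizer of $v$ depends only on $Q(v)$, so \emph{every} $\Gamma$-orbit in $V_m(\z)$ has $\q$-split stabilizer and hence finite $H_i\cap\Gamma$ --- there is no genuine dichotomy to track, and in fact only Theorem~\ref{main} is invoked.
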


For a binary quadratic form $q(x,y)=ax^2+bxy+cy^2$, its discriminant $\op{disc}(q)$ is defined to
be $b^2-4ac$. The group $\SL_2(\br)$ acts on the space of binary
quadratic forms by $(g. q)(x, y)= q(g^{-1}(x,y))$ and preserves the discriminant.
For $d\in \z$, denote by $\cB_d(\z)$
  the space of integral binary quadratic forms 
with discriminant $d$.
Note that $\cB_d(\z)\ne \emptyset$ if and only if $d$ congruent to $0$ or $1 \mod 4$.
Now $d$ is a square {\em if and only if\/} the stabilizer of every $q\in \cB_d(\z)$
in $\SL_2(\z)$ is infinite {\em if and only if\/} every $q\in \cB_d(\z)$ is decomposable over $\z$.
 (cf. \cite{BV}).

Therefore Corollary \ref{arith}
 implies the following:
\begin{theorem}.
For any non-zero square $d\in \z$, there exists $c_0>0$ such that
$$\#\{q\in \cB_d(\z): \op{disc}(q)=d, \|q\|<T\} \sim  c_0 \cdot T\log T  $$
where $\|ax^2+bxy+cy^2\|=\|(a,b,c)\|$.
 \end{theorem}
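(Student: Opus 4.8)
The plan is to deduce this theorem directly from Corollary \ref{arith} by realizing the space $\cB_d(\z)$ of integral binary quadratic forms of discriminant $d$ as a set of integral points on a suitable one-sheeted hyperboloid. First I would set up the identification: a binary quadratic form $q(x,y)=ax^2+bxy+cy^2$ corresponds to the point $(a,b,c)\in\br^3$, and the discriminant $\op{disc}(q)=b^2-4ac$ is a quadratic form $Q$ in the variables $(a,b,c)$ of signature $(2,1)$ (one checks the signature by diagonalizing $b^2-4ac$). Under this identification, the $\SL_2(\br)$-action on binary quadratic forms by $(g.q)(x,y)=q(g^{-1}(x,y))$ becomes a $\q$-rational representation $\iota:\SL_2\to\SL_3$ whose image is precisely the identity component $G$ of $\SO_Q$; this is the classical statement that $\PSL_2\simeq\SO(2,1)^\circ$. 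The arithmetic group $\SL_2(\z)$ acts on $\cB_d(\z)$, and its image is an arithmetic subgroup $\G<G(\q)$ preserving $V_d(\z)=\{x\in\z^3: Q(x)=d\}$.

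Next I would verify the hypotheses of Corollary \ref{arith}. Since $d\ne 0$ is a square, $Q(v_0)=d>0$ for any $v_0\in\cB_d(\z)$, so the relevant variety is the one-sheeted hyperboloid. The key point, recorded in the excerpt following \cite{BV}, is that $d$ being a nonzero square is equivalent to the stabilizer in $\SL_2(\z)$ of every $q\in\cB_d(\z)$ being infinite, equivalently every such $q$ decomposing over $\z$ as a product of two integral linear forms. Translating through $\PSL_2\simeq\SO(2,1)^\circ$, the stabilizer of the point $v_0$ corresponding to such a $q$ is a $\q$-split torus in $G$ — that is, it is isotropic over $\q$ — which is exactly the hypothesis of Corollary \ref{arith}. (The standard model is $q(x,y)=xy$ up to scaling, whose stabilizer in $\SL_2$ is the diagonal torus.) One should also note $\cB_d(\z)\ne\emptyset$: since $d$ is a square, $d\equiv 0$ or $1\bmod 4$, so $\cB_d(\z)$ is nonempty and we may pick such a $v_0$.

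Finally, by the theorem of Borel–Harish-Chandra \cite{BH}, $\cB_d(\z)=V_d(\z)$ breaks into finitely many $\SL_2(\z)=\G$-orbits, say $v_0^{(1)}\G,\dots,v_0^{(r)}\G$; each representative has $\q$-isotropic stabilizer by the above, so Corollary \ref{arith} applies to each orbit and gives $\#\{x\in v_0^{(i)}\G:\|x\|<T\}\sim c_i\cdot T\log T$ with $c_i>0$. Here $\|q\|=\|(a,b,c)\|$ is precisely the norm $\|\cdot\|$ on $\br^3$ in Corollary \ref{arith}, so no change of norm is needed. Summing over $i$ yields
\[
\#\{q\in\cB_d(\z):\op{disc}(q)=d,\ \|q\|<T\}\sim c_0\cdot T\log T,
\qquad c_0=\sum_{i=1}^r c_i>0.
\]
The main obstacle here is not analytic but structural: one must carefully check that the image of $\SL_2(\z)$ under $\iota$ is commensurable with $\SO_Q(\z)$ (so that it is genuinely an arithmetic subgroup of $G(\q)$ in the sense required), and confirm the precise dictionary between "stabilizer infinite in $\SL_2(\z)$" and "stabilizer $\q$-isotropic in $G$" — both are standard but need the $\PSL_2\simeq\SO(2,1)^\circ$ identification stated explicitly. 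Everything else is a direct citation of Corollary \ref{arith} together with finiteness of orbits.
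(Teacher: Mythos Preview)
Your approach is correct and is exactly how the paper deduces the theorem: as an immediate consequence of Corollary~\ref{arith} applied to the discriminant form $Q(a,b,c)=b^2-4ac$. Two small points to clean up. First, the stabilizer in $\SL_2(\z)$ of a form with nonzero square discriminant is \emph{finite}, not infinite---your own example $q=xy$ has stabilizer the diagonal torus, whose $\z$-points are $\{\pm I\}$; the text you quote contains the same slip, but it is precisely the finite-stabilizer (equivalently $\q$-split) case in which Theorem~\ref{main} applies, so your subsequent identification of the stabilizer as a $\q$-split torus is the correct hypothesis. Second, Corollary~\ref{arith} already counts \emph{all} integral points on the level set $\{Q=d\}$, not a single $\G$-orbit, so there is no need to decompose into orbits and sum; one application suffices (if you prefer the orbit-by-orbit route, cite Theorem~\ref{main} for each orbit instead).
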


\subsection{Orthogonal translates of a divergent geodesic}

Let $G=\SL_2(\R)$ and $\Gamma$ be a non-uniform lattice in $G$. For
$s\in\R$, define
\begin{equation} \label{ha}
h(s)=\Bigl[\begin{smallmatrix} \cosh(s/2) & \sinh(s/2) \\
  \sinh(s/2) & \cosh(s/2)\end{smallmatrix} \Bigr], \ 
a(s)=\Bigl[\begin{smallmatrix} e^{s/2} & 0 \\
 0 & e^{-s/2}\end{smallmatrix}\Bigr]
\end{equation}
and set $H=\{h(s):s\in \br\}$.

In the case when the orbit $\G\ba\G H$ is closed and
of finite length, the limiting distribution of the translates 
$\G\ba\G H a(T)$ as $T\to\infty$ is described by the unique $G$-invariant probability measure $d\mu(g)=dg$ on $\G\ba G$ \cite{DukeRudnickSarnak1993}, that is, if $s_0$ is the period
of $\G\cap H\ba H$, then for any $\psi \in C_c(\G\ba G)$,
$$\lim_{T\to \pm \infty} \frac{1}{s_0} \int_{s=0}^{s_0} \psi (h(s)a(T))ds =\int_{\G\ba G}\psi \; dg.$$

Similarly,
  understanding the limit of the translates $\G\ba \G H a(T)$ when
$\G\ba \G H$ is divergent (and of infinite length) is the main new ingredient  in
our proofs of Theorem \ref{main}.

\begin{theorem}  \label{thm:main} Let $x_0\in \Gamma\ba G$ and suppose
  that $x_0h(s)$ diverges as $s\to+\infty$, that is,
  $x_0h(s)$ leaves every compact subset for all sufficiently large $s\gg 1$.
For a given compact subset $\cK\subset \G\ba G$,
there exist $c=c(\cK)>0$ and $M=M(\cK)>0$ such that for any $\psi \in C^\infty(\G\ba G)$
with support in $\cK$, we have, as $|T|\to \infty$,
$$ \int_{0}^{\infty}\psi (x_0h(s)a(T))ds= \int_{0}^{T+M}\psi (x_0h(s)a(T))ds = |T|\int \psi\; d\mu +O ( 1) $$
where the implied constant depends only on $\cK$ and a Sobolev norm
of $\psi$. 
\end{theorem}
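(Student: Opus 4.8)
The plan is to reduce the statement to an equidistribution result on $\G\ba G$ by exploiting the structure of the orbit $x_0h(s)$ near the divergent end, where it must eventually enter and stay in a fixed cusp neighborhood. First I would fix, for each cusp of $\G\ba G$, a horospherical coordinate chart: after conjugating, the cusp neighborhood is the image of $\{n(u)a(r)m : r > r_0\}$ where $n(u)=\smat{1 & u \\ 0 & 1}$ and $m$ ranges over a compact piece, and the stabilizer of the cusp in $\G$ is generated (up to finite index) by a unipotent translation $u\mapsto u+\ell$. Since $x_0h(s)$ diverges as $s\to +\infty$, for all large $s$ the point $x_0h(s)$ lies in one such cusp neighborhood, and I would compute its coordinates $(u(s), r(s), m(s))$ explicitly: because $h(s)$ is hyperbolic with axis disjoint from the relevant fixed point, $r(s)$ grows linearly and $u(s)$ converges, while $m(s)$ converges as well.

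Next I would use the right $a(T)$-translate. The key algebraic identity is that $h(s)$, written in the Iwasawa-type coordinates adapted to the cusp, interacts with $a(T)$ so that the $s$-integral $\int_0^{T+M}\psi(x_0 h(s) a(T))\,ds$ can be rewritten, after the substitution matching the cusp coordinates, as an integral over a long piece of a horocycle (or a translated unipotent orbit) inside $\G\ba G$ of length comparable to $|T|$. Concretely, $h(s)a(T)$ should, up to bounded error and up to the compact $M$-factor, look like $n(u(s))a(r(s)+T)m(s)$ with the horocyclic variable sweeping out an interval of length $\asymp |T|$ after rescaling; the cutoff at $s = T+M$ is exactly what makes the tail $\int_{T+M}^\infty$ negligible, since there $r(s)+T$ is so large that $x_0h(s)a(T)$ has left the support $\cK$ of $\psi$.

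The analytic heart is then an effective equidistribution statement for expanding translates of horocycle (unipotent) orbits in $\G\ba \SL_2(\R)$: for a long horocycle piece of length $L$, $\frac{1}{L}\int_0^L \psi(x\, n(u))\,du = \int_{\G\ba G}\psi\,d\mu + O(L^{-\delta})$ with the error governed by the spectral gap, or, in the non-uniform lattice setting, the sharper statement that the error is $O(1)$ uniformly once one integrates against a fixed compactly supported $\psi$ and the horocycle starts from a fixed compact set — this is where mixing of the geodesic flow and decay of matrix coefficients (advertised in the keywords) enter. I would invoke such a result (e.g. via the thickening/Eskin–McMullen argument, or Sarnak's and Burger's horocycle equidistribution estimates) to get that the rescaled integral equals $|T|\int\psi\,d\mu + O(1)$, with the implied constant depending on $\cK$ through the spectral gap and injectivity radius, and on a Sobolev norm of $\psi$ through the effective error term. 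Matching the two displayed integrals is then immediate from the tail estimate above.

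The main obstacle I expect is controlling the error uniformly in $T$ as an $O(1)$ rather than $o(|T|)$: a naive application of horocycle equidistribution gives a multiplicative $o(1)$ error, hence an additive $o(|T|)$ error, which is too weak for Theorem \ref{main}(2). Getting a genuine $O(1)$ bound requires either a careful spectral-gap argument tracking that the non-constant spectral contributions integrate to something bounded (not just $o(|T|)$) — essentially because the relevant exponential sums/integrals telescope — or a geometric argument using the precise rate at which $x_0h(s)$ approaches the cusp, so that the discrepancy between the $s$-parametrization and the arclength parametrization of the limiting horocycle contributes only boundedly. I would also need to handle the boundary contributions near $s=0$ and near $s = T+M$, and to verify that the constant $M=M(\cK)$ can be chosen depending only on $\cK$ (via the geometry of the cusp neighborhoods and the rate $r(s) \sim s$), which is a finite check once the cusp coordinates are fixed.
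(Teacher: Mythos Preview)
Your overall architecture --- truncate the $s$-integral via cusp geometry, convert to a unipotent picture, and feed in an effective equidistribution/mixing estimate --- is the same as the paper's. The truncation giving the first displayed equality is exactly Proposition~\ref{prop:zero}, and the analytic input is Theorem~\ref{thm:mixing} (effective decay of matrix coefficients). Where your proposal has a genuine gap is the middle step.

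The claim that in cusp coordinates $h(s)a(T)$ looks like $n(u(s))a(r(s)+T)m(s)$, with the horocyclic variable sweeping a single interval of length $\asymp|T|$, is not correct: right multiplication by $a(T)$ does not simply add $T$ to the height coordinate (it does not commute with the $K$-part), and the curve $s\mapsto x_0h(s)a(T)$ is nowhere close to one long horocycle. The paper's key maneuver, which your sketch is missing, is the identity
\[
x_0h(s)a(T)=\bigl(x_0h(S_i)a(S_i)\bigr)\cdot\bigl(a(-S_i)h(s-S_i)a(S_i)\bigr)\cdot a(T-S_i),
\]
together with two observations: the base point $x_i:=x_0h(S_i)a(S_i)$ lies in a \emph{fixed} compact set for every $S_i\in[0,T]$ (this is where the divergence hypothesis is used, via Proposition~\ref{prop:divergent}), and on a window of $s$-length $\delta_i\asymp e^{-S_i}$ the conjugate $a(-S_i)h(\cdot)a(S_i)$ is, up to $O(e^{-2S_i})$, a unipotent piece $n_+([0,r_0])$ of \emph{fixed bounded} length. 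Thus $[0,T]$ is partitioned into $\asymp e^{T}$ windows $[S_i,S_{i+1}]$; on each one you apply the short-segment mixing estimate (Theorem~\ref{thm:mixing}) with the \emph{varying} push $a(R_i)$, $R_i=T-S_i$. The $O(1)$ you correctly flag as the crux then falls out because the weighted errors $\delta_i r_0^{-1}\,e^{-\theta_0 R_i}\asymp e^{-S_i}e^{-\theta_0(T-S_i)}$ sum to a bounded quantity (take $S_i=\log(2r_0 i+1)$).

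Note also that the analytic input actually used is mixing for \emph{short} unipotent arcs pushed by $a(T)$ from a fixed compact set (Theorem~\ref{thm:mixing}), not a long-horocycle equidistribution theorem of Sarnak/Burger type applied once; your invocation of $\tfrac{1}{L}\int_0^L\psi(xn(u))\,du=\mu(\psi)+O(L^{-\delta})$ does not directly apply, since there is no single horocycle of length $L$ in sight and the base point must stay in a compact set for the uniform estimate to hold.
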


\begin{remark}\rm Consider the hyperbolic plane $\mathbb H^2$.
A parabolic fixed point for $\G$ 
is a point in the geometric boundary $\partial_\infty(\mathbb H^2)$  fixed by a parabolic element of $\G$.
If $\mathcal F\subset \mathbb H^2$ is a finite sided Dirichlet region for $\G$, then
the parabolic fixed points of $\G$ are precisely the $\G$-orbits of 
vertices of $\overline{\mathcal F}$ lying
in $\partial_\infty(\mathbb H^2)$.
Let $\pi: G\to \mathbb H^2$ denote the orbit map $g\mapsto g(i)$.
For $x_0=\G g_0\in \G\ba G$,
the image $\pi(g_0 H)$ is a geodesic in $\mathbb H^2$ with two endpoints
$g_0H(+\infty):=\lim_{s\to \infty} \pi(g_0h(s))$ and $g_0H(-\infty):=
\lim_{s\to -\infty} \pi(g_0h(s))$ in $\partial_\infty(\mathbb H^2)$.
We remark that $x_0h(s)$ diverges as $s\to + \infty$ (resp. $s\to -\infty$)
if and only if
$g_0H(+\infty)$ (resp. $g_0H(-\infty)$)  is a parabolic fixed point for $\G$ (cf. Theorem \ref{thm:cusps}).
\end{remark}

 \begin{corollary}\label{corodiv}  Suppose that $x_0 H$ is closed and non-compact. For any $\psi\in
  C_c(\Gamma\ba G)$,
  \[
  \lim_{T\to\pm\infty} \frac{1}{2\abs{T}} \int_{
  -\infty}^\infty
  \psi(x_0h(s)a(T))\,ds=\int_{\Gamma\ba G} \psi\,d\mu.
  \]\end{corollary}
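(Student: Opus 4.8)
The plan is to deduce this bilateral statement from Theorem \ref{thm:main} (the unilateral estimate for divergent half-geodesics) together with an elementary decomposition of $\R$ into two half-lines. Since $x_0H$ is closed and non-compact, the orbit $\{x_0h(s):s\in\R\}$ leaves every compact set as $s\to+\infty$ and (applying the same reasoning with the reversed parametrization) also as $s\to-\infty$; so both tails are divergent in the sense of Theorem \ref{thm:main}. First I would reduce to $\psi\in C_c^\infty(\G\ba G)$ by a standard density argument: given $\psi\in C_c(\G\ba G)$ and $\e>0$, pick $\psi_1,\psi_2\in C_c^\infty$ with $\psi_1\le\psi\le\psi_2$, both supported in a fixed compact neighborhood $\cK$ of $\supp\psi$, and with $\int(\psi_2-\psi_1)\,d\mu<\e$; then the normalized integrals of $\psi$ are squeezed between those of $\psi_1$ and $\psi_2$, so it suffices to prove the limit for smooth compactly supported test functions.

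Next, write
\[
\int_{-\infty}^{\infty}\psi(x_0h(s)a(T))\,ds
=\int_{0}^{\infty}\psi(x_0h(s)a(T))\,ds+\int_{0}^{\infty}\psi(x_0h(-s)a(T))\,ds.
\]
The first term is handled directly by Theorem \ref{thm:main} applied to the point $x_0$ and the test function $\psi$, giving $|T|\int\psi\,d\mu+O(1)$. For the second term, observe that $s\mapsto x_0h(-s)=x_0\,w\,h(s)\,w\inv$ for the Weyl-type element $w$ with $h(-s)=w\,h(s)\,w\inv$ (concretely $w=\smat{1&0\\0&-1}$ or $\smat{0&1\\-1&0}$ up to the precise normalization of $h$), and $w$ normalizes both $H$ and $\{a(T)\}$, with $w\,a(T)\,w\inv=a(-T)$. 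Hence the second integral equals $\int_0^\infty \psi'(x_0'\,h(s)\,a(-T))\,ds$ where $x_0'=x_0 w$ and $\psi'(g)=\psi(gw\inv)$; since $x_0'h(s)$ diverges as $s\to+\infty$ (because $x_0h(-s)$ does) and $\psi'\in C_c^\infty$ with $\int\psi'\,d\mu=\int\psi\,d\mu$ by right-invariance of $d\mu$, Theorem \ref{thm:main} gives $|T|\int\psi\,d\mu+O(1)$ for this term as well. Adding the two contributions yields $\int_{-\infty}^\infty\psi(x_0h(s)a(T))\,ds=2|T|\int\psi\,d\mu+O(1)$; dividing by $2|T|$ and letting $T\to\pm\infty$ gives the claim.

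The only genuine subtlety — and the step I would be most careful about — is justifying that the divergence of $x_0h(s)$ as $s\to-\infty$ puts us honestly in the hypothesis of Theorem \ref{thm:main} after conjugation by $w$: one must check that conjugation by $w$ really does intertwine the half-geodesic $s\mapsto x_0h(-s)$ with a forward-divergent half-geodesic of the standard form $x_0'h(s)a(\cdot)$, and that the sign flip $a(T)\mapsto a(-T)$ is harmless because Theorem \ref{thm:main} is stated for $|T|\to\infty$ with an error $O(1)$ that is uniform in the sign of $T$. Once the bookkeeping of $w$, $H$, and $a(T)$ is pinned down (a short computation with the explicit $2\times 2$ matrices in \eqref{ha}), everything else is the density reduction and addition above, both routine. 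Note also that the reduction to compactly supported smooth $\psi$ is what lets us invoke Theorem \ref{thm:main} with a fixed compact set $\cK$ and hence a uniform implied constant.
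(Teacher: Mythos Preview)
Your proof is correct and follows essentially the same route as the paper: reduce to smooth $\psi$ by density, invoke the divergence of both tails (the paper packages this as Lemma~\ref{div}), split $\int_{-\infty}^\infty=\int_0^\infty+\int_{-\infty}^0$, and apply the unilateral estimate to each half (the paper records the negative-$s$ case separately as Corollary~\ref{cor:negativeside}, whereas you reduce it to the positive case by conjugation---these are equivalent). One small correction: your first candidate $w=\smat{1&0\\0&-1}$ has determinant $-1$ and is not in $G=\SL_2(\R)$, so only your second choice $w=\smat{0&1\\-1&0}$ (or $k(\pi)$ in the paper's notation) actually does the job.
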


\subsection*{Acknowledgements} We thank Zeev Rudnick for insightful comments.

\section{Structure of cusps in $\Gamma\ba G$ and divergent trajectory}\label{sec2}
Let $G=\SL_2(\R)$ and $\Gamma$ be a non-uniform lattice in $G$.
We will keep the notation for $h(s)$ and $a(s)$ from \eqref{ha} in the introduction.
 Let
$$N=\{\begin{smallpmatrix}1 & s \\ 0 &
  1\end{smallpmatrix}:s\in\R \}\quad\text{ and } \quad U=wNw^{-1}$$
  where
  $w=\bigl[\begin{smallmatrix} \cos(\pi/4) & \sin(\pi/4) \\
  -\sin(\pi/4) & \cos(\pi/4) \end{smallmatrix}\bigr]$.
  Note that $h(s)=w a(s)w^{-1}$ for all $s\in \br$.
   For
$\eta>0$, let $$H_\eta=\{h(s):s/2 >-\log \eta\} .$$

 Let $K=\SO(2)=\{g\in G: g g^t=I\}$. Then
the multiplication map $U\times H\times K\to G$: $(u,h,k)\mapsto uhk$ is a diffeomorphism.

The following classical result may be found at \cite[Thm. 0.6]{GarlandRaghunathan1970} or \cite{DaniSmillie1984}:
\begin{theorem}  \label{thm:cusps} There exists a finite set
  $\Sigma\subset G$ such that the following holds:
  \begin{enumerate}
  \item $\Gamma\ba\Gamma \sigma U$ is compact for every $\sigma\in\Sigma$.
  \item For any $\eta>0$, the set
    \begin{equation*}  
\cK_\eta:=\Gamma\ba G \setminus
      \bigcup_{\sigma\in\Sigma} \G\ba \G \sigma UH_\eta K
    \end{equation*}
    is compact; and any compact subset of $\Gamma\ba G$ is contained in $\cK_\eta$
    for some $\eta>0$.

  \item There exists $\eta_0>0$ such that for $i=1,2$, if  $\sigma_i\in \Sigma$, $u_i\in U$, $h_i\in H_{\eta_0}$, and   
   $\Gamma \sigma_1 u_1 h_1 k_1 = \Gamma \sigma_2 u_2 h_2 k_2 $, 
    then $\sigma_1=\sigma_2$, $k_1=\pm k_2$ and $h_1=h_2$.
  \end{enumerate}
\end{theorem}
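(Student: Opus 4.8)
The plan is to realize $\G\ba G$ as the unit tangent bundle $\T^1(M)$ of the hyperbolic surface $M=\G\ba\bH^2$ and to read off the cusp structure from the thick-thin decomposition of a non-uniform lattice. Since $\G$ is non-uniform but of finite covolume, $M$ is a finite-area surface with finitely many cusps, and by the theory of Dirichlet fundamental domains for Fuchsian groups (as in Garland--Raghunathan) there are only finitely many $\G$-inequivalent parabolic fixed points on $\bh$. I would let $\xi_1,\dots,\xi_r\in\bh$ be representatives of these finitely many $\G$-classes, choose $g_i\in G$ with $g_i\cdot\infty=\xi_i$, and normalize so that the stabilizer of $\xi_i$ in $\G$ is conjugate by $g_i$ into the standard parabolic subgroup $N$. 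The set $\Sigma$ will be built from these $g_i$, together with a finite coset correction coming from the fact that we have conjugated $N$ rather than $U=wNw^{-1}$; concretely I would set $\sigma_i$ so that $\sigma_i U\sigma_i^{-1}$ is the $\G$-stabilizer direction of the $i$-th cusp.

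\textbf{Key steps, in order.} First I would establish (1): for each cusp representative, the horocyclic orbit through the cusp, projected to $\G\ba G$, is compact. This is the statement that the quotient of the unipotent $U$ by its intersection with the (conjugated) parabolic stabilizer $\G_{\xi_i}$ is a circle; it follows because $\G_{\xi_i}$ is a nontrivial lattice in the one-parameter group $\sigma_i U\sigma_i^{-1}$, so $\G\ba\G\sigma_i U\simeq(\G\cap\sigma_i U\sigma_i^{-1})\ba U$ is compact. Next, for (2), I would invoke the precise shape of a cusp neighborhood: in the upper half-plane model, a standard cusp at $\infty$ of height parameter $\eta$ is the region $\{z:\Im z>1/\eta^2\}$ modulo the parabolic group, and its preimage in $G$ under the $UHK$-coordinates is exactly a set of the form $\sigma_i U H_\eta K$. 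The identity $h(s)=wa(s)w^{-1}$ together with the Iwasawa-type decomposition $G=UHK$ lets me translate the geometric ``above height $1/\eta^2$'' description into the group-theoretic product $\sigma_i UH_\eta K$, and the complement $\cK_\eta$ is the image of the thick part, hence compact; that any compact set lies in some $\cK_\eta$ is the assertion that the cusp neighborhoods exhaust a neighborhood of each end. Finally, for (3), I would prove the disjointness-and-uniqueness statement by choosing $\eta_0$ small enough (equivalently cusp neighborhoods high enough) that the Margulis lemma / discreteness of $\G$ forces the cusp neighborhoods $\G\sigma_i UH_{\eta_0}K$ to be embedded and pairwise disjoint. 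If $\G\sigma_1 u_1h_1k_1=\G\sigma_2 u_2h_2k_2$ then some $\ga\in\G$ intertwines the two horoballs; for $\eta_0$ small the only such $\ga$ preserves a single cusp, forcing $\sigma_1=\sigma_2$, and then comparing the $UHK$-coordinates after the $UHK$-uniqueness (the multiplication map $U\times H\times K\to G$ being a diffeomorphism) gives $h_1=h_2$ and $k_1=\pm k_2$, the sign ambiguity reflecting $\pm I\in K$ acting trivially on $\bH^2$.

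\textbf{The main obstacle.} The genuinely delicate point is the choice of a uniform $\eta_0$ in part (3) making all cusp neighborhoods simultaneously embedded and disjoint, and verifying that the identification of coordinates descends cleanly to $h_1=h_2$ rather than only to equality up to the parabolic stabilizer. This is exactly where one needs the Margulis thickness constant for $\G$: one must rule out any ``accidental'' coincidence $\ga\sigma_1 u_1h_1=\sigma_2 u_2h_2k$ with $\ga$ mapping one high horoball into another distinct one, which cannot happen once the horoballs are raised above the maximal self-intersection height of the collection. I would handle this by first proving embeddedness of each individual horoball neighborhood (choosing $\eta_0$ below the systole-type bound for each cusp) and then proving pairwise disjointness by a separate, slightly smaller choice of $\eta_0$; the finiteness of $\Sigma$ is what makes taking a single common $\eta_0$ legitimate. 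Everything else reduces to the standard dictionary between the $UHK$-coordinates on $G$ and horoball geometry on $\bH^2$, which I would cite from \cite{GarlandRaghunathan1970} or \cite{DaniSmillie1984} rather than reprove.
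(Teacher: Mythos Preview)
The paper does not prove this theorem at all: it is stated as a classical result and simply cited to \cite[Thm.~0.6]{GarlandRaghunathan1970} and \cite{DaniSmillie1984}. Your outline is essentially the standard reduction-theory/thick-thin argument one finds in those references, so there is nothing substantive to compare; your sketch is correct in spirit and matches the cited literature, but the paper itself offers no proof to weigh it against.
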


Consider the standard representation of $G=\SL_2(\R)$ on $\R^2$: $((v_1, v_2), g)\mapsto (v_1,v_2)g$. Let $\norm{\cdot}$ denote the Euclidean norm on $\R^2$. 
Let
\[
p=(0,1) w^{-1} =(-\sin(\pi/4),
  \cos(\pi/4)).\]
Then $pU=p$, and $ph(s)=(0,1) a(s) w^{-1} = e^{-s/2}p$ for all $s\in \br$.  Also 
\begin{equation}  \label{eq:eta}
g\in UH_\eta K \Leftrightarrow \norm{pg}<\eta.
\end{equation}

\begin{proposition}[{Dani~\cite{Dani:Crelle}}]  \label{prop:divergent}
Let $x_0\in \Gamma\ba G$ be such
  that the trajectory $\{x_0 h(s):s\geq 0\}$ is divergent. Then there
  exist $\sigma_0\in {\pm I}\Sigma$, $s_0\in \R$ and $u\in U$ such that
  $x_0 =\G \sigma_0 uh(s_0)$.
\end{proposition}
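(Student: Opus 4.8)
The plan is to use the characterization of divergence in terms of the function $s\mapsto \norm{pg_0h(s)}$ together with the cusp decomposition from Theorem~\ref{thm:cusps}. Write $x_0=\G g_0$. Since the trajectory $\{x_0h(s):s\ge 0\}$ is divergent, for every $\eta>0$ there is $S_\eta$ such that $x_0h(s)\in \bigcup_{\sigma\in\Sigma}\G\ba\G\sigma UH_\eta K$ for all $s\ge S_\eta$; shrinking $\eta$ below the $\eta_0$ of part (3) if necessary, this realizes $x_0h(s)$ in a single cusp neighborhood. Thus for each such $s$ we may write $\G g_0 h(s)=\G\sigma(s) u(s) h(t(s)) k(s)$ with $\sigma(s)\in\Sigma$, $u(s)\in U$, $h(t(s))\in H_{\eta_0}$, $k(s)\in K$. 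The first step is to show that $\sigma(s)$ is eventually constant, say $=\sigma_0$: as $s$ increases continuously, $x_0h(s)$ moves continuously, and by part (3) the data $(\sigma(s),h(t(s)),\pm k(s))$ is \emph{uniquely determined} once we are inside the thin part with parameter $\eta_0$, so $\sigma(s)$ cannot jump while staying in the deep cusp. (Here one uses that the sets $\G\ba\G\sigma UH_{\eta_0}K$ for distinct $\sigma$ are disjoint, which is exactly the content of part (3).)

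Next I would pin down the dependence on $s$ using the vector $p$. Replacing $g_0$ by $\sigma_0^{-1}\cdot(\text{a lift})$, we are reduced to the case $x_0=\G g_0$ with $g_0\in UH K$ for all large $s$ after translating: more precisely, for some lift, $g_0 h(s)\in UH_{\eta_0}K$ for all $s\ge S$, hence by \eqref{eq:eta} we have $\norm{p g_0 h(s)}<\eta_0$ for all $s\ge S$. Now compute: writing $p g_0 = \alpha p + \beta q$ in a basis adapted to the $H$-action (recall $ph(s)=e^{-s/2}p$, and $H$ acts on the complementary line by $e^{+s/2}$), we get $\norm{pg_0h(s)}^2 = \alpha^2 e^{-s} + \beta^2 e^{s}$ up to the cross term. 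For this to stay bounded (indeed $\to 0$, since divergence forces $\norm{pg_0h(s)}\to 0$) as $s\to+\infty$, we need $\beta=0$, i.e.\ $p g_0$ is a scalar multiple of $p$. Choosing the sign and absorbing the scalar, this says $g_0\in \{g: pg\in\R_{>0}p\}\cup\{g:pg\in\R_{<0}p\}$, and since $pU=p$ and $p h(s)=e^{-s/2}p$ spans all positive scalars, we conclude $g_0\in \pm U H$. Thus $g_0=\pm u\,h(s_0)$ for some $u\in U$, $s_0\in\R$, which after reinstating $\sigma_0\in \{\pm I\}\Sigma$ gives $x_0=\G\sigma_0 u h(s_0)$.

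The main obstacle is the first step: ruling out that $\sigma(s)$ (or the branch of the decomposition) changes as $s$ varies, and more fundamentally showing that once $\{x_0h(s)\}$ enters a cusp it stays there monotonically in the cusp coordinate. This is where one must combine the divergence hypothesis with the rigidity in Theorem~\ref{thm:cusps}(3): divergence guarantees $x_0h(s)$ is in the thin part for all large $s$, part (3) guarantees the $\sigma$-coordinate is locally constant there, and connectedness of $[S,\infty)$ then forces it to be globally constant. Once that is in hand, everything reduces to the elementary computation of $\norm{pg_0h(s)}$ above, which is the routine part. I would also note this is essentially Dani's argument in \cite{Dani:Crelle}, adapted to the present normalizations of $H$, $U$, and $p$.
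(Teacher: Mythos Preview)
Your approach is essentially the same as the paper's: localize in one cusp via Theorem~\ref{thm:cusps}, then use the vector $p$ to force the lift into $\pm UH$. The constancy of $\sigma(s)$ via connectedness and part~(3) is fine. But there is a real gap at the sentence ``for some lift, $g_0 h(s)\in UH_{\eta_0}K$ for all $s\ge S$.'' Knowing that $x_0h(s)\in\Gamma\backslash\Gamma\sigma_0 UH_{\eta_0/2}K$ for all $s\ge S$ tells you that for each $s$ there is \emph{some} $\gamma(s)\in\Gamma$ with $\gamma(s)g_0h(s)\in\sigma_0 UH_{\eta_0/2}K$; it does not tell you that the \emph{fixed} lift $g_1$ chosen at $s=S$ satisfies $g_1h(s-S)\in UH_{\eta_0}K$ for all later $s$. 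Your connectedness argument pins down $\sigma$, but not $\gamma(s)$, and without that your boundedness assertion $\norm{pg_0h(s)}<\eta_0$ is unjustified --- which is precisely the input you need for the eigenvector computation.

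The paper closes this gap by a short contradiction using the intermediate value theorem together with the \emph{full} strength of part~(3) (uniqueness of the $H$-coordinate, not just of $\sigma$). Pick $g_1\in UH_{\eta_0/2}K$ with $x_0h(s_1)=\Gamma\sigma g_1$. If $pg_1\notin\mathbb{R} p$ then $\norm{pg_1h(s)}\to\infty$, so by continuity there is an $s>0$ with $\eta_0/2\le\norm{pg_1h(s)}<\eta_0$. At this $s$ we have $g_1h(s)=uhk$ with $h\in H_{\eta_0}\setminus H_{\eta_0/2}$, while divergence gives another representation $\Gamma\sigma g_1h(s)=\Gamma\sigma u'h'k'$ with $h'\in H_{\eta_0/2}$. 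Both $h,h'\in H_{\eta_0}$, so part~(3) forces $h=h'$, a contradiction. This is exactly the missing step: it shows that once you are in $UH_{\eta_0/2}K$, the fixed lift can never reach the shell $\eta_0/2\le\norm{p\,\cdot\,}<\eta_0$, hence stays inside $UH_{\eta_0/2}K$ forever (an open-and-closed argument, if you prefer). With that in hand your eigenvector computation $pg_1=\alpha p$ goes through verbatim.
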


\begin{proof}
  By Theorem~\ref{thm:cusps}, there exists $s_1>0$ and $\sigma\in\Sigma$ such that 
  $x_0h(s) =\Gamma \sigma UH_{\eta_0/2}K$ for all $s\geq s_1$. Let $g_1\in UH_{\eta_0/2}K$ be such that $x_0h(s_1)=\Gamma\sigma g_1$. We claim that $pg_1\in \R p$. If not, then $\norm{ph(s)}\to \infty$ as $s\to\infty$, and hence there exists $s>0$ such that $\eta_0/2 \norm{pg_1h(s)} < \eta_0$. By \eqref{eq:eta}, 
$g_1h(s)\in uhk$ for some $u\in U$, $h\in H_{\eta_0}$ and $k\in K$. Therefore 
\[
\Gamma \sigma uhk=\Gamma \sigma g_1h(s)=x_0h(s_1+s)\in \Gamma \sigma UH_{\eta_0/2}K.
\]
 By Theorem~\ref{thm:cusps}(3), we have that $h\in H_{\eta_0/2}$. But then $\norm{pg_1h(s)}=\norm{puhk}<\eta_0/2$, a contradiction. Therefore our claim that $pg_1\in \R p$ is valid. Hence $g_1= u_1h(s)\{\pm I\}$ for some $u_1\in U$ and $s/2\geq -\log(\eta_0/2)$. Thus $x_0h(s_1)=\Gamma \sigma u_1 h(s) \{\pm I\}$, and hence $x_0=\Gamma\sigma_0 u_1 h(s-s_1)$, where $\sigma_0={\pm I}\sigma$. 
\end{proof}  

\begin{proposition} \label{prop:zero}  Let  $x_0\in \Gamma\ba G$ be such
  that the trajectory $\{x_0 h(s):s\geq 0\}$ is divergent. Let $\cK\subset \G\ba G$ be a compact subset.
There exists $M_1=M_1(\cK)> 0$ such that
 $$x_0 h(s) a(T)\not\in \cK $$  for any $T\in\R$ and $s>0$ satisfying $s>|T|+M_1$. In particular,
  for any $f\in C(\G\ba G)$ with support inside $\cK$,
   \[
  \int_{0}^\infty f(x_0h(s) a(T))\,ds=\int_0^{\abs{T}+M_1}
  f(x_0 h(s)a(T) )\,ds.
  \]
\end{proposition}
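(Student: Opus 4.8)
The plan is to exploit the explicit coordinates provided by Proposition~\ref{prop:divergent} and the characterization \eqref{eq:eta} of the cusp neighborhoods. By Proposition~\ref{prop:divergent} we may write $x_0 = \G\sigma_0 u h(s_1)$ for some $\sigma_0\in\{\pm I\}\Sigma$, $u\in U$ and $s_1\in\br$; after absorbing $h(s_1)$ into the parameter we may as well assume $x_0=\G\sigma_0 u$. The compact set $\cK$ is contained in $\cK_{\eta_0}$ for some $\eta_0>0$ by Theorem~\ref{thm:cusps}(2), so it suffices to show that $x_0 h(s)a(T)$ lies in the cusp neighborhood $\G\ba\G\sigma_0 U H_{\eta_0} K$ once $s>|T|+M_1$, i.e. by \eqref{eq:eta} that $\norm{p\,\sigma_0^{-1}(\text{representative})}<\eta_0$ — more precisely, that $\norm{p\, u\, h(s)\, a(T)}<\eta_0$, using $p\sigma_0^{-1}=\pm p$ and $pU=p$.

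First I would compute $p\,u\,h(s)\,a(T)$ directly. Since $pU=p$ and $ph(s)=e^{-s/2}p$, we get $p\,u\,h(s) = e^{-s/2} p$, and then $p\,u\,h(s)\,a(T) = e^{-s/2}\,p\,a(T)$. Writing $p=(-\sin(\pi/4),\cos(\pi/4))$ and $a(T)=\diag(e^{T/2},e^{-T/2})$ gives $p\,a(T) = (-\sin(\pi/4)e^{T/2},\ \cos(\pi/4)e^{-T/2})$, whose Euclidean norm is $\tfrac{1}{\sqrt2}\sqrt{e^{T}+e^{-T}}\le e^{|T|/2}$. Hence
\[
\norm{p\,u\,h(s)\,a(T)} \le e^{-s/2}\cdot e^{|T|/2} = e^{(|T|-s)/2}.
\]
This is less than $\eta_0$ precisely when $s > |T| + 2\log(1/\eta_0)$, so taking $M_1 = M_1(\cK) := 2\log(1/\eta_0)$ (with $\eta_0$ depending on $\cK$ via Theorem~\ref{thm:cusps}(2), and enlarged to be positive if necessary) does the job. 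For such $s$ we conclude $x_0h(s)a(T)\in \G\ba\G\sigma_0 UH_{\eta_0}K\subset \G\ba G\setminus\cK_{\eta_0}\subset \G\ba G\setminus\cK$, which proves the first assertion. Note this handles both signs of $T$ uniformly, and the bound is actually stronger than needed — the trajectory escapes to the cusp at a definite exponential rate.

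The second assertion is then immediate: for $f\in C(\G\ba G)$ supported in $\cK$, the integrand $f(x_0h(s)a(T))$ vanishes for all $s>|T|+M_1$ by the first part, so the integral over $[0,\infty)$ coincides with the integral over $[0,|T|+M_1]$. I do not anticipate a genuine obstacle here; the only point requiring care is bookkeeping with the $\{\pm I\}$ ambiguity in $\sigma_0$ (harmless since $p$ is only changed by a sign, which does not affect the norm) and confirming that the $\eta_0$ furnished by Theorem~\ref{thm:cusps}(2) can be taken small enough that $\cK\subset\cK_{\eta_0}$ while also $M_1>0$; both are trivially arranged by shrinking $\eta_0$. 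The essential content is simply the explicit computation $\norm{p\,u\,h(s)\,a(T)} = e^{-s/2}\norm{p\,a(T)}$ together with the elementary estimate $\norm{p\,a(T)}\le e^{|T|/2}$.
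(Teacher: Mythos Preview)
Your proof is correct and follows essentially the same route as the paper: invoke Proposition~\ref{prop:divergent} to write $x_0$ in $\sigma_0 U H$-coordinates, then use $pU=p$, $ph(s)=e^{-s/2}p$, and $\norm{p\,a(T)}\le e^{|T|/2}$ together with \eqref{eq:eta} to force $x_0h(s)a(T)$ into the cusp neighborhood $\G\ba\G\sigma UH_{\eta}K$ once $s>|T|+M_1$; the paper keeps the shift $s_0$ explicit (setting $M_1=-s_0-2\log\eta$) rather than absorbing it, but this is cosmetic. One small correction: the claim $p\sigma_0^{-1}=\pm p$ is not true for general $\sigma_0\in\{\pm I\}\Sigma$, but it is also not needed---since $x_0h(s)a(T)=\G\sigma_0\bigl(uh(s)a(T)\bigr)$ and $-I$ is central and lies in $K$, it suffices to check $uh(s)a(T)\in UH_{\eta_0}K$, which by \eqref{eq:eta} is precisely $\norm{p\,uh(s)a(T)}<\eta_0$, with no $\sigma_0$ appearing.
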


\begin{proof}
 By Proposition~\ref{prop:divergent}, $x_0=\Gamma \sigma_0 uh(s_0)$ for some
  $\sigma_0\in {\pm \Sigma}, u\in U, s_0\in \br$. By Theorem~\ref{thm:cusps}(2),  let $\eta>0$ be such that $\cK \subset \cK_\eta$. Let
$M_1=-s_0-2\log(\eta)$. Since $s-|T|>-s_0-2\log \eta$, we have
  \begin{equation} \label{eq:sT}
\begin{array}{ll}
    \norm{ p uh(s_0)h(s)a(T)}
    &=\norm{  ph(s+s_0)a(T) }\\
    &=e^{-(s+s_0)/2}\norm{pa(T)} \\
    & < e^{-(s+s_0)/2}e^{{\abs{T}/2}}\\
    &= e^{-(s+s_0-\abs{T})/2}<\eta.
\end{array}
  \end{equation}
  Therefore by \eqref{eq:eta}, $uh(s_0)h(s)a(T)\in  UH_{\eta}K$, and hence 
\[
x_0h(s)a(T)\in \Gamma \sigma_0UH_\eta K\subset \Gamma\bs G\setminus K_\eta.
\] 
\end{proof}

\section{Uniform mixing on compact sets} 

Let $G=\SL_2(\br)$ and $\G<G$ be a lattice.
Let $\mu$ denote the $G$-invariant probability measure on $\G\ba G$.
For an orthonormal basis $X_1, X_2, X_3$ of $\mathfrak{sl}(2, \R)$
with respect to an $Ad$-invariant scalar product, and $\psi \in C^\infty(\G\ba G)$,
we consider the Sobolev norm
$$\cS_m(\psi )=\max\{\|X_{i_1}\cdots X_{i_j}(\psi )\|_2:1\le i_j \le 3 ,0\le j\le m\} .$$

The well-known spectral gap property for $L^2(\Gamma\ba G)$ says that the trivial
representation is isolated (see \cite[Lemma 3]{Bekka1998}) in the Fell topology of the unitary
dual of $G$.
It follows that there exist $\theta>0$ and $c>0$ such that
for any $\psi_1, \psi_2\in C^\infty(\Gamma\ba G)$ 
with $\int\psi_i d\mu=0$, $\cS_1(\psi_i)<\infty$ and for any $T>0$,
\begin{equation}\label{theta}
|\langle a(T)\psi_1, \psi_2\rangle | \le  c e^{-\theta |T|}\cS_1(\psi_1)\cS_1(\psi_2)\end{equation}
(cf. \cite{CowlingHaggerupHowe}, \cite{Venkatesh2007})

Write $\cO_\e=\{g\in G: \|g- I\|_\infty\le \e\}$.
For a compact subset $\cK\subset\Gamma\ba G$, let $0<
\e_0(\cK)\le 1 $ be the injectivity radius of $\cK$, that is,
$\e_0(\cK)$ is the supremum of $0< \e\le 1$ such that the multiplication map
$ \cK \times  \cO_{\e} \to \Gamma\ba G$
is injective.

For $s\in \br$, let
$$ n_+(s)=\begin{smallpmatrix} 1 & 0 \\ s & 1 \end{smallpmatrix}
        \quad \text{ and } \quad n_-(s)=\begin{smallpmatrix}1 & s \\ 0 &
        1\end{smallpmatrix}. $$

  \begin{theorem} \label{thm:mixing}
  Let $\cK\subset\G\ba G$ be a compact subset and $\eta>0$.
   There exists $c=c(\cK)>0$ such that for any $\psi \in C^\infty(\Gamma\ba G)$ with support in $\cK$,
     for any $|T|\geq 1$, $x\in \cK$, and $0<r_0<\e_0(\cK)$, we have
    \begin{equation}
      \Abs{\int_0^{r_0} \psi (xn_\nu(s)a(T) )\,ds- r_0  \int \psi \,d\mu}
      \leq  c (\cS_3(\psi) +1) e^{-\theta_0 |T|}
    \end{equation} for some $\theta_0$ depending only on the spectral gap for $L^2(\G\ba G)$.
    Here and in what follows, the sign $\nu=+$ if $T>0$ and
    $\nu=-$ if $T<0$.
  \end{theorem}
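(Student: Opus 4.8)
The plan is to reduce the one-dimensional integral along the unstable (or stable) horocycle orbit $\{xn_\nu(s) : 0 \le s \le r_0\}$ to a matrix coefficient of the geodesic flow $a(T)$, and then invoke the quantitative mixing estimate \eqref{theta}. First I would set up a ``thickening'' of the orbit segment: choose a bump function $\phi \in C_c^\infty(\cO_{r_0})$ supported near the identity with $\int \phi = 1$, and consider the function on $\G \ba G$ obtained by smearing $\psi$ against the translates $\phi(g)\, dg$ along the remaining two directions transverse to $n_\nu(\cdot)$. Concretely, write a point near $e$ in coordinates adapted to the decomposition $n_\nu(s)\, a(t)\, n_{-\nu}(u)$ (or $n_\nu \cdot A \cdot K$), so that integrating $\psi(x\, g\, a(T))$ over $g$ in a small box becomes, after the change of variables $g \mapsto a(T)^{-1} g a(T)$, an integral that concentrates: conjugation by $a(T)$ expands the $n_\nu$-direction by $e^{|T|}$ and contracts the complementary directions by $e^{-|T|}$. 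This is the standard ``unstable horocycle thickens under geodesic push'' mechanism.

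Next I would make this precise. Let $F = \psi$ and define $\tilde F_T(x) = \int_{\text{box}} \psi(x n_\nu(s) a(t) n_{-\nu}(u))\, \rho(t)\rho(u)\, dt\, du$ for suitable scaled bump functions $\rho$ of width $\sim e^{-|T|}$; then $\langle a(T)\, G_1, G_2\rangle$ for appropriate $G_1, G_2$ (one of them built from $\psi$, the other a fixed bump at $x$) differs from $\int_0^{r_0}\psi(xn_\nu(s)a(T))\, ds$ by an error controlled by the modulus of continuity of $\psi$ times the box size, i.e. $O(e^{-|T|}\cS_1(\psi))$, using $0 < r_0 < \e_0(\cK)$ to guarantee the box embeds injectively. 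Applying \eqref{theta} to the mean-zero parts $G_i - \int G_i\, d\mu$ gives a bound $c\, e^{-\theta|T|}\cS_1(G_1)\cS_1(G_2)$; the Sobolev norms of $G_1, G_2$ are controlled by $\cS_3(\psi)$ and by $\cK$ (the extra derivatives come from differentiating the scaled bumps $\rho$, which costs powers of $e^{|T|}$ — this is the one subtlety). The main term $\int G_1\, d\mu \cdot \int G_2\, d\mu \cdot r_0$ reproduces $r_0 \int \psi\, d\mu$ after accounting for the normalizations. Choosing $\theta_0 < \theta$ absorbs the polynomial-in-$e^{|T|}$ losses from the derivative count.

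The step I expect to be the main obstacle is bookkeeping the derivative losses: when one smears against bumps of width $e^{-|T|}$, the Sobolev norm $\cS_1$ of the smeared function picks up a factor like $e^{|T|}$ per derivative, and one must check that after feeding this into $e^{-\theta|T|}$ the net exponent is still a genuine negative constant $-\theta_0$, and that the number of derivatives needed is at most $3$ (hence $\cS_3(\psi)$, not a higher norm). The correct accounting is to smear in only the two transverse directions (each of width $e^{-|T|}$) and leave the $n_\nu$-direction alone, and to integrate by parts / use the explicit $a(T)$-conjugation to see that each transverse derivative of the smeared kernel, after conjugation, becomes a bounded-coefficient vector field applied to $\psi$ — so only the $\psi$-side needs genuine smoothness, up to order $3$ to be safe, while the fixed-bump side contributes a constant depending only on $\cK$. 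I would also need to double-check the endpoint contributions at $s=0$ and $s=r_0$ (where the thickened box sticks out past the segment) are $O(e^{-|T|})$, again by the injectivity-radius hypothesis and the modulus of continuity of $\psi$, and that the sign convention $\nu = +$ for $T>0$ is exactly what makes $n_\nu(\cdot)$ the \emph{expanding} horocycle under $a(T)$, which is what the concentration argument requires.
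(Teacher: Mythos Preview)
Your overall strategy---thicken the horocycle segment transversally, interpret the result as a matrix coefficient $\langle a(T)\psi,\tau\rangle$, apply the spectral-gap bound \eqref{theta}, then compare back to the line integral---is exactly the paper's approach. The execution, however, has a scaling error that would break the argument.

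You propose transverse bumps of width $\sim e^{-|T|}$ and then plan to ``absorb the polynomial-in-$e^{|T|}$ losses'' by taking $\theta_0<\theta$. But the Sobolev norm of a bump of width $\delta$ in two directions is of order $\delta^{-c}$ with $c\ge 2$, so with $\delta=e^{-|T|}$ the mixing estimate gives $e^{-\theta|T|}\cdot e^{c|T|}$, which diverges whenever $\theta<c$; since $\theta$ is just the spectral gap, it can be arbitrarily small and there is no reason to have $\theta>c$. Your integration-by-parts remark does not rescue this: transferring derivatives to $\psi$ still leaves the volume normalization factor $e^{c'|T|}$ in the $L^2$-norm of the bump.

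The point you are missing is that no $T$-dependent scaling is needed. The two transverse directions are $A$ and $N^-$, and for $w\in AN^-\cap\cO_\e$ one has $wa(T)\in a(T)\cO_{2\e}$ (the $A$-part commutes, the $N^-$-part contracts). So with a \emph{fixed} width $\e$ the thickening error is $O(\e\,\cS_3(\psi))$ via the Lipschitz bound, while $\cS_1$ of the bump is $O(\e^{-3})$ and the mixing term is $O(\e^{-3}e^{-\theta|T|}\cS_1(\psi))$. Now optimize: take $\e=e^{-\theta|T|/4}$ and both terms are $O(e^{-\theta|T|/4}\cS_3(\psi))$, giving $\theta_0=\theta/4$. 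This is precisely the paper's computation; the $\cS_3$ (rather than $\cS_1$) enters only to control $\|\psi\|_{C^1}$ uniformly on the compact set $\cK$ via Sobolev embedding.
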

\begin{proof} Consider the case when $T>0$ and hence $\nu=+$. The other case can be proved similarly.

Let $\e>0$.
Fix a non-negative function $\rho_\e\in C^\infty_c(N^+)$ which is $1$ on
$n_+[0,r_0]$ and $0$ outside $n_+[-\e, r_0+  \e]$.
Let $N^{\pm}=\{n_{\pm}(s):s\in \br\}$
and $ W_\e:=AN^-\cap \cO_\e$.
Let $\mu_0$ denote the right invariant measure on $AN^-$ such that
$d\mu_0 \otimes dn=d\mu$.
We choose a non-negative
function $\phi_\e \in C^\infty(AN^-)$ supported inside $W_\e$ and $\int \phi_\e d\mu_0=1$.

If we a consider a function $\tau_{x,\e}$ on $\G\ba G$ which is defined to be
$\tau_{x,\e}(y):= \rho_\e(n_+(s) )\phi_\e(w) \in C^\infty(\Gamma\ba G)$
if $y=xn_+(s)w\in x \;\text{supp}(\rho_\e) W_\e$ and $0$ otherwise,
then
$\cS_1(\tau_{x,\e})\ll  \e^{-3}$ where the implied constant is independent of $x$ and
\begin{multline}\label{idd}\langle a(T)\psi, \tau_{x,\e}\rangle
=\int_{y\in \Gamma\ba G } \psi(ya(T)) \tau_{x,\e}(y) d\mu(y)
\\= \int_{u\in W_\e, s\in \R} \psi( xn_+(s)wa(T)) \phi_\e(w) \rho_\e (n_+(s))d\mu_0(w)ds .\end{multline}

As $\cK$ is compact, the $C^1$-norm of $f$ supported inside $\cK$ is bounded above by
a uniform multiple of $\cS_3(\psi )$ (cf. \cite[Thm 2.20]{Aubinbook}) and hence for some $c_1>0$,
\begin{equation}\label{sup}
\max\{\|\psi\|_\infty, C_\psi\}<c_1 \cS_3(\psi)\end{equation}
where $C_\psi$ is the Lipschitz constant of $\psi$.

Since for all $T>0$,
$W_\e a(T)\subset a(T)\cO_{2\e} , $
we have for all $w\in W_\e$ and $T\gg 1$,
$$\Abs{\psi(x n_+(s)wa(T)) -\psi(x n_+(s)a(T))}\le 2 c_1 \cS_3(\psi) \e .$$

Hence by \eqref{idd},
\begin{align*} &\Abs{\langle a(T)\psi, \tau_{x,\e}\rangle -\int_{s\in \R} \psi(x n_+(s) a(T)) \rho_\e(n_+(s)) ds }\\
 &=\Bigl| \int_{w,s} \psi(x n_+(s) w a(T)) \phi_\e(w) \rho_\e (n_+(s))d\mu_0(w)ds\\
& \qquad\qquad\qquad \qquad -\int_{s\in \R} \psi(x n_+(s) a(T)) \rho_\e(n_+(s)) ds \Bigr| \\
&\ll 2 c_1 \cS_3(\psi) \e\|\rho_\e\|_1\le 2 c_1 \cS_3(\psi) \e (r_0+2\e).
\end{align*}

Since
$$ \Abs{\langle a(T)\psi, \tau_{x,\e}\rangle-\int \psi d\mu\cdot
 \|\rho_\e\|_1 } \ll e^{-\theta T} \e^{-3}\cS_1(\psi), $$
we deduce
\begin{align*} & \Abs{\int_{0}^{r_0} \psi(xn_+(s) a(T)) ds -r_0\int \psi d\mu}
\\ &\le \Abs{\int_{s\in \br} \psi(xn_+(s) a(T))\rho_\e(n_+(s)) ds -r_0\int \psi d\mu \|\rho_\e\|_1 } +4c_1  \e  \cS_3(\psi)\\
&\le \Abs{\langle a(T)\psi, \tau_{x,\e}\rangle
- r_0\int \psi d\mu\cdot  \|\rho_\e\|_1} + 6 c_1 \e \cS_3(\psi) \\
&\le 6c_1 \e \cS_3(\psi)  +c\cdot e^{-\theta T} \e^{-3} \cS_3(\psi)\end{align*}
for some $c>0$.
Hence for $\e=e^{-\theta T/4}$ and some $c_2>0$,
$$\Abs{\int_{0}^{r_0} \psi(xn_+(s)a(T)) ds - r_0 \int \psi d\mu }\le c_2 (\cS_3(\psi) +1)  e^{-\theta T/4} . $$
 \end{proof}

\section{Translates of divergent orbits}
Let $x_0\in \G\ba G$ be such that $x_0h(s)$ diverge as $s\to \infty$. 

  \begin{theorem} \label{thm:combine2} 
  For any $|T|>1$ and any $\psi\in C_c^\infty(\G\ba G)$
\[
     \int_0^{|T|} \psi(x_0h(s) a(T) )\,ds =| T| \int
      \psi\,d\mu +O(1)\cS_3(\psi).
     \]
  \end{theorem}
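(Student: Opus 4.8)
\textbf{Proof plan for Theorem \ref{thm:combine2}.}
The strategy is to reduce the integral over the long divergent $H$-orbit segment to many short pieces of unstable-horocycle orbits, to which the uniform mixing estimate of Theorem \ref{thm:mixing} applies, and then to sum the resulting exponentially small errors. The key geometric input is Proposition \ref{prop:divergent}: we may write $x_0=\Gamma\sigma_0 u\, h(s_0)$, so after the change of variables $s\mapsto s-s_0$ the trajectory becomes $\Gamma\sigma_0 u\, h(s)a(T)$, and since $\Gamma\ba\Gamma\sigma_0 U$ is compact (Theorem \ref{thm:cusps}(1)), the relevant base point $x:=\Gamma\sigma_0 u$ lies in a fixed compact set $\cK_0$ depending only on $x_0$. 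The conjugation identity $h(s)=wa(s)w^{-1}$ converts the $h$-flow into the diagonal flow $a(s)$ conjugated by $w$, and one computes that $h(s)a(T)$ factors, up to the compact conjugating element $w$, as $n_\nu(\ast)\,a(T)\,a(\ast)k(\ast)$ — more precisely the product $a(T)h(s)$ can be rewritten using the $KAN$ or $NAK$ decomposition so that the $a(T)$ ``absorbs'' into the unstable horocycle direction $N^\nu$ with the scale $r_0$ comparable to $e^{-|T|}$ times the $H$-parameter. Carrying out this matrix bookkeeping carefully is what tells us which of $n_+$ or $n_-$ appears (hence the sign $\nu$) and what the exact relation between the $s$-variable and the horocycle variable is.

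Next I would partition the interval $[0,|T|]$ (after the shift, an interval of comparable length, using Proposition \ref{prop:zero} to control the tail) into subintervals on which the reparametrized orbit is, up to a bounded conjugation, a horocycle arc $xn_\nu(s')a(T')$ of length $r_0<\e_0(\cK)$ with $|T'|\gtrsim 1$, apply Theorem \ref{thm:mixing} on each, obtaining on each piece the main term (length)$\cdot\int\psi\,d\mu$ plus an error $\ll(\cS_3(\psi)+1)e^{-\theta_0|T'|}$. The crucial point is that along the divergent trajectory the effective ``$T'$'' values grow, so the errors form a geometric series; summing over the $O(|T|)$ pieces still yields a total error of $O(1)\cS_3(\psi)$ rather than $O(|T|)\cS_3(\psi)$, because the number of pieces where $|T'|$ is close to its minimum is bounded and the rest contribute a convergent tail. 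Summing the main terms reconstructs $|T|\int\psi\,d\mu$ up to the $O(1)$ boundary discrepancies from the partition.

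The main obstacle I anticipate is the precise matrix analysis relating the flow $s\mapsto h(s)a(T)$ to horocycle arcs with a controlled relation between arc length and the effective mixing time $T'$: one must verify that, as $s$ ranges over $[0,|T|]$, the point $h(s)a(T)$ visits (after applying the fixed conjugation by $w$) a family of horocycle segments whose effective $a$-time increases linearly in the distance from the ``turning point'' of the geodesic, so that Theorem \ref{thm:mixing} is applicable with summable errors. A secondary technical point is ensuring all base points stay in a single compact set $\cK$ with uniform injectivity radius, which follows from the cocompactness of $\Gamma\ba\Gamma\sigma_0 U$ together with Proposition \ref{prop:zero} guaranteeing the orbit does not re-enter the cusp for $s$ in the relevant range; and one must absorb the bounded conjugation by $w$ (and the bounded $a(\ast)k(\ast)$ factors) into the Sobolev norm $\cS_3(\psi)$, which is harmless since $w$ is fixed and acts smoothly. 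Once these reductions are in place, the estimate follows by adding up the per-piece bounds from Theorem \ref{thm:mixing}.
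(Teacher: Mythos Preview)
Your high-level plan---partition $[0,|T|]$ into pieces, convert each to a horocycle arc, apply Theorem~\ref{thm:mixing}, and sum the errors---is exactly the paper's strategy. However, the specific mechanism you propose for the conversion is not the one that works, and a couple of your heuristics are backwards.

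The identity $h(s)=wa(s)w^{-1}$ and an Iwasawa decomposition of $a(T)h(s)$ do not produce horocycle arcs with a fixed base point in a compact set. What the paper does instead is insert $a(S_i)a(-S_i)$ between $h(s)$ and $a(T)$: writing $s=S_i+s'$ with $s'\in[0,\delta_i]$,
\[
x_0h(s)a(T)=\bigl[x_0h(S_i)a(S_i)\bigr]\cdot\bigl[a(-S_i)h(s')a(S_i)\bigr]\cdot a(T-S_i)
= x_i\cdot h^{a(S_i)}(s')\cdot a(R_i),
\]
where $R_i=T-S_i$. The conjugated one-parameter group satisfies $h^{a(S_i)}(s')=n_+(e^{S_i}s'/2)\,w_i(s')$ with $w_i(s')=I+O(e^{-2S_i})$ for $s'=O(e^{-S_i})$; this is the genuine horocycle approximation. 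The base points $x_i=x_0h(S_i)a(S_i)$ lie in the fixed compact set $\cK_{\eta_0/\sqrt 2}$ not by the cocompactness of $\Gamma\sigma_0 U$ (which plays no role here), but by the direct norm computation $\norm{p\,h(R_0+S)a(S)}\in[\eta_0/\sqrt 2,\eta_0]$ for all $S\geq 0$. Finally, the effective mixing times $R_i=T-S_i$ \emph{decrease} along the partition, not increase; the error term is $\sum_i (\delta_i/r_0)\,e^{-\theta_0 R_i}\asymp e^{-\theta_0 T}\sum_i e^{-(1-\theta_0)S_i}$, and with the choice $S_i=\log(2r_0 i+1)$ (so $\delta_i\asymp r_0e^{-S_i}$ and $e^{S_i}\delta_i\asymp r_0$) this is a $p$-series summing to $O(e^{\theta_0 T})$, giving $O(1)$ after the prefactor---not a geometric series. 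Your outline would go through once these three points (the $a(S_i)$-conjugation, the norm argument for compactness of $\{x_i\}$, and the correct error bookkeeping) are substituted for the $w$-conjugation picture.
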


\begin{proof}
Let $R_0=-\log \eta_0$. Due to Proposition~\ref{prop:divergent}, replacing $x_0$ by another point in $x_0H$, we may assume that $x_0=\Gamma \sigma_0 h(R_0)$. For any $S>0$, $\norm{ph(R_0)h(S)a(S)}\in [\eta_0/\sqrt{2}, \eta_0]$. Hence $x_0h(R_0)h(S)a(S)\in K_{\eta_0/\sqrt{2}}$. 

 Let $r_0$ be the injectivity radius of $K_{\eta_0/\sqrt{2}}$, that is,
$r_0=\e_0( K_{\eta_0/\sqrt{2}})$.
 Let $S_0=0$, and choose $S_i$ such that
 $ r_0e^{-S_i}\le \delta_i:=S_{i+1}-S_i\le 2r_0e^{-S_i}$
 for each $i$. We will choose $S_i=\log(2r_0 i +1)$ for each $i$. Then $x_0h(S_i)a(S_i)\in K_{\eta_0/\sqrt{2}}$. We put $R_i=T-S_i$.

We will express $x_0h([S_i,S_{i+1}])a(T)=x_ih^{a(S_i)}([0,\delta_i])a(R_i)$, where $x_i=x_0h(S_i)a(S_i)$ and $h^{a(S_i)}(s)=a(-S_i)h(s)a(S_i)=n(e^{S_i}s/2)w_i(s)$, and $\abs{w_i(s)}=O(e^{-2S_i})$. Note that $r_0/2\leq e^{S_i}\delta_i/2 \leq r_0$. 

By Theorem \ref{thm:mixing}, we have
\[
\int_0^{r_0} \psi(x_i n(s) a(R_i)) ds - r_0\int\psi d\mu = \cS_3(\psi)\cdot O(e^{-\theta_0R_i})
\]
and hence
\[
\int_{S_i}^{S_{i+1}} \psi(x_0h(s)a(T)) ds = \frac{\delta_i}{r_0}\int_0^{r_0}\psi(x_in(s)a(R_i))ds +
 \cS_3(\psi)\cdot O(e^{-2S_i}\delta_i).
\]

Let $k=k(T)$ be such that $S_k\le T<S_k +r_0e^{-S_k}$. Therefore, since $\delta_i r_0^{-1}  \leq 2e^{-S_i}$,

\begin{align*}
&\int_0^T \psi(xh(s)a(T)) ds =\sum_{i=0}^{k-1}\int_{S_i}^{S_{i+1}} \psi(xh(s)a(T)) ds +O(e^{-S_{k}})
\\ & = \sum_{i=0}^{k-1} \delta_i \frac{1}{r_0} \int_0^{r_0}\psi(x_in(s)a(T))ds +
\cS_3(\psi)\cdot O(e^{-2S_i}\delta_i)+O(1)
\\&= \sum_{i=0}^{k-1} \delta_i\mu(\psi) + \sum_{i=0}^{k-1} \delta_i r_0^{-1}\cS_3(\psi)\cdot O(e^{-\theta_0R_i})
+\cS_3(\psi)\cdot O(e^{-2S_i}\delta_i) +O(1)
\\
&= T \mu (\psi) +  O( \sum_{i=1}^{k-1} e^{-S_i}e^{-\theta_0 R_i} +\sum_{i=1}^k e^{-3S_i} )\cS_3(\psi) +O(1)
\\&= T \mu (\psi) +  O( e^{-\theta_0 T}\sum_{i=0}^{k-1} e^{(1-\theta_0) S_i} +\sum_{i=0}^{k-1} e^{-3S_i} )\cS_3(\psi) +O(1).
\end{align*}

Since $S_i=\log(2r_0 i +1)$, 
$0<T-S_k<2e^{-T}$ implies that $k<\frac{e^T-1}{2r_0}<k+1$, and hence
$$\sum_{i=0}^{k-1} e^{-3S_i}\ll \sum_{i=1}^{k-1} \frac{1}{(2r_0 i+1)^3}=O(k^{-2}+1)=O(e^{-2T}+1)<\infty$$
and
$$\sum_{i=0}^{k-1} e^{(1-\theta_0) S_i } \ll \int_{0}^{e^T} \frac{1}{(2r_0 x+1)^{1-\theta_0}} dx=O(e^{\theta_0 T}).$$
 Hence $$ e^{-\theta_0 T}\sum_{i=0}^{k-1} e^{(1-\theta_0) S_i} +\sum_{i=0}^{k-1} e^{-3S_i} =O(1).$$
Therefore $$\int_0^T \psi(xh(s)a(T)) ds = T \mu (\psi) +  O(1)\cS_3(\psi) .$$
\end{proof}

Theorem~\ref{thm:main} follows from the following:

\begin{theorem}\label{mct2} Let $x_0h(s)$ diverge as $s\to \infty$.
For a given compact subset $\cK\subset \G\ba G$, and $\psi\in C^\infty(\G\ba G)$
with support in $\cK$, we have
$$  \int_{0}^{\infty}\psi(x_0h(s)a(T))ds=|T|\cdot \int \psi\;d\mu+O(1)\cS_3(\psi). $$
\end{theorem}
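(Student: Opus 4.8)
The plan is to combine Theorem \ref{thm:combine2} (which handles the integral over $s\in[0,|T|]$) with Proposition \ref{prop:zero} (which asserts the integrand is supported, as a function of $s$, in a bounded-length interval past $|T|$), and then control the short leftover piece $s\in[|T|,|T|+M_1]$ by a trivial sup-norm bound. More precisely, first I would invoke Proposition \ref{prop:zero}: since $\psi$ has support in the compact set $\cK$, there is $M_1=M_1(\cK)>0$ with $x_0h(s)a(T)\notin\cK$ whenever $s>|T|+M_1$, so
\[
\int_0^\infty \psi(x_0h(s)a(T))\,ds=\int_0^{|T|+M_1}\psi(x_0h(s)a(T))\,ds.
\]
Next I would split this as $\int_0^{|T|}+\int_{|T|}^{|T|+M_1}$ and apply Theorem \ref{thm:combine2} to the first integral, giving $|T|\int\psi\,d\mu+O(1)\cS_3(\psi)$.

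For the remaining piece $\int_{|T|}^{|T|+M_1}\psi(x_0h(s)a(T))\,ds$, the interval has length $M_1=M_1(\cK)$ independent of $T$, so the integral is bounded in absolute value by $M_1\|\psi\|_\infty$. By the Sobolev embedding estimate \eqref{sup}, $\|\psi\|_\infty<c_1\cS_3(\psi)$ since $\psi$ is supported in $\cK$, hence this contributes $O(1)\cS_3(\psi)$ with implied constant depending only on $\cK$. Adding the two pieces yields
\[
\int_0^\infty\psi(x_0h(s)a(T))\,ds=|T|\int\psi\,d\mu+O(1)\cS_3(\psi),
\]
as claimed.

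There is one small bookkeeping point to address: Theorem \ref{thm:combine2} is stated for $|T|>1$, so the case $|T|\le 1$ (equivalently $a(T)$ in a fixed compact set) must be disposed of separately — but there the integral $\int_0^\infty\psi(x_0h(s)a(T))\,ds$ is again over an interval of bounded length by Proposition \ref{prop:zero} (with the same $M_1$, since $|T|\le 1$), so it is $O(1)\cS_3(\psi)$ directly, and the main term $|T|\int\psi\,d\mu$ is also $O(1)\cS_3(\psi)$ in that range; so the asserted identity holds trivially. I expect no genuine obstacle here: all the real work has been done in Theorems \ref{thm:mixing} and \ref{thm:combine2} and in Proposition \ref{prop:zero}; the only thing requiring a moment's care is making sure the implied constants in the error term depend only on $\cK$ (which is why we feed $\cK$, rather than the individual $\psi$, into $M_1$ and into the injectivity-radius constants), and that the divergence hypothesis "as $s\to\infty$" is used consistently — it enters only through Proposition \ref{prop:divergent}, which underlies both Proposition \ref{prop:zero} and Theorem \ref{thm:combine2}.
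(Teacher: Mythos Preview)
Your proposal is correct and follows essentially the same approach as the paper. The paper's proof is three lines: invoke Proposition~\ref{prop:zero} to truncate to $[0,|T|+M_1]$, then write $\int_0^{|T|+M_1}=(|T|+M_1)\int\psi\,d\mu+O(1)\cS_3(\psi)$ (implicitly via Theorem~\ref{thm:combine2}), and absorb $M_1\int\psi\,d\mu$ into the error. Your explicit split $\int_0^{|T|}+\int_{|T|}^{|T|+M_1}$ and sup-norm bound on the short piece is exactly what justifies that middle step, and your handling of $|T|\le 1$ is a harmless extra bit of care the paper omits.
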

\begin{proof} Since $x_0 h(s)$ diverges as $s\to\infty$, by Proposition \ref{prop:zero},
there exists $M_1=M_1(\cK) >0$ such that
\begin{align*}
&  \int_{0}^{\infty}\psi (x_0h(s)a(T))ds=
\int_{0}^{|T|+M_1}\psi (x_0h(s)a(T))ds \\
& = (|T|+M_1) \int \psi \; d\mu + O (1)\cS_3(\psi)\\
&=  |T| \int \psi \; d\mu + O (1)\cS_3(\psi).
\end{align*}
\end{proof}

By a similar argument, we also deduce the following:

\begin{corollary}   \label{cor:negativeside}
If $x_0h(s)$ diverges as $s\to -\infty$, then
$$\int_{-\infty}^{0}\psi(x_0h(s)a(T))ds=|T| \int \psi d\mu+O(1)\cS_3(\psi) .$$
\end{corollary}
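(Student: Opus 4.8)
The plan is to reduce the statement for trajectories diverging as $s\to-\infty$ to the already-established case of trajectories diverging as $s\to+\infty$ by applying a suitable symmetry of $G$ that swaps the two ends of the $H$-orbit while behaving predictably with respect to the flow $a(T)$. Concretely, conjugation by the element $w_0=\smat{0 & 1 \\ -1 & 0}$ (or a reflection) satisfies $w_0 h(s) w_0^{-1} = h(-s)$ and $w_0 a(T) w_0^{-1} = a(-T)$ up to sign, so the automorphism $g\mapsto w_0 g w_0^{-1}$ of $G$ descends to a map of $\G'\ba G$ where $\G' = w_0\G w_0^{-1}$; but since $w_0$ normalizes $K$ and $\G$ is a lattice, one can instead work directly with the substitution $s\mapsto -s$ inside the integral.

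First I would set $y_0 = x_0$ viewed along the reversed parameter: the hypothesis is that $x_0 h(s)$ diverges as $s\to-\infty$, equivalently $x_0 h(-s)$ diverges as $s\to+\infty$. Writing $\tilde h(s) = h(-s)$, one checks that $\tilde h$ is again a one-parameter subgroup (the transpose-inverse of $h$, or $w_0 h w_0^{-1}$), and the relation between $\tilde h$, $a(T)$, and the Sobolev norm $\cS_3$ is preserved under the relevant automorphism of $G$, since $\cS_m$ was defined using an $\Ad$-invariant scalar product, hence is automorphism-invariant. Then $\int_{-\infty}^{0}\psi(x_0 h(s) a(T))\,ds = \int_{0}^{\infty}\psi(x_0 h(-s)a(T))\,ds$; transporting this through the automorphism $\phi: g\mapsto w_0 g w_0^{-1}$ of $G$ (which sends $h(-s)\mapsto h(s)$ and $a(T)\mapsto a(-T)$, up to the harmless sign $\pm I$), this becomes $\int_0^\infty (\psi\circ\phi^{-1})(\phi(x_0) h(s) a(-T))\,ds$ on $\phi(\G)\ba G$, where $\phi(\G)$ is again a non-uniform lattice and $\phi(x_0)h(s)$ diverges as $s\to+\infty$.

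Now I would invoke Theorem~\ref{mct2} applied to the lattice $\phi(\G)$, the point $\phi(x_0)$, the function $\psi\circ\phi^{-1}$ (supported in the compact set $\phi(\cK)$), and the flow parameter $-T$. This yields $\int_0^\infty (\psi\circ\phi^{-1})(\phi(x_0)h(s)a(-T))\,ds = |{-T}|\int (\psi\circ\phi^{-1})\,d\mu + O(1)\cS_3(\psi\circ\phi^{-1})$. Since $\phi$ preserves the Haar probability measure $\mu$ (it is an automorphism carrying $\G$ to $\phi(\G)$, hence carries the invariant probability measure to the invariant probability measure) we have $\int(\psi\circ\phi^{-1})\,d\mu = \int\psi\,d\mu$, and similarly $\cS_3(\psi\circ\phi^{-1}) \asymp \cS_3(\psi)$ by $\Ad$-invariance of the inner product defining $\cS_3$; the implied constant depends only on $\phi(\cK)$, hence only on $\cK$. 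Finally $|{-T}| = |T|$, giving the claimed formula.

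The main obstacle I anticipate is purely bookkeeping rather than conceptual: one must be careful that the automorphism $\phi$ genuinely intertwines everything in sight — in particular that it maps the closed (or divergent) $H$-orbit structure and the constants $M_1(\cK)$, $c(\cK)$ from Propositions~\ref{prop:zero} and the mixing estimate to their counterparts for $\phi(\cK)$, so that the final implied constant indeed depends only on $\cK$ and a Sobolev norm of $\psi$. An alternative, slightly more hands-on route that avoids changing the lattice is to observe that $a(-T)$ and $a(T)$ differ by conjugation by $w_0\in N_G(H)\cap N_G(K)$, rerun the argument of Theorem~\ref{thm:combine2} with $h$ replaced by $s\mapsto h(-s)$ (using the mixing estimate Theorem~\ref{thm:mixing} with $\nu=-$, which is exactly the case already proved there), and then apply Proposition~\ref{prop:zero} with $s<0$ to truncate the integral. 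Either way, no new analytic input beyond Theorems~\ref{thm:mixing}, \ref{thm:combine2}, \ref{mct2} and Proposition~\ref{prop:zero} is required.
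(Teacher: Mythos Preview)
Your proposal is correct. The paper gives no proof at all beyond the phrase ``by a similar argument,'' which is exactly your alternative route: substitute $s\mapsto -s$ and rerun Proposition~\ref{prop:zero} and Theorems~\ref{thm:combine2}--\ref{mct2} with the case $\nu=-$ of Theorem~\ref{thm:mixing}.

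Your primary route via the automorphism $\phi(g)=w_0 g w_0^{-1}$ also works, but it is more elaborate than necessary because it changes the lattice to $w_0\G w_0^{-1}$. A cleaner version of the same idea stays inside $\G\ba G$: set $y_0=x_0 w_0$ and $\tilde\psi(y)=\psi(y w_0^{-1})$. Then $x_0 h(-s)a(T)=y_0 h(s)a(-T)w_0^{-1}$, so
\[
\int_{-\infty}^{0}\psi(x_0 h(s)a(T))\,ds=\int_{0}^{\infty}\tilde\psi(y_0 h(s)a(-T))\,ds,
\]
and since $y_0 h(s)=x_0 h(-s)w_0$ diverges as $s\to+\infty$, Theorem~\ref{mct2} applies directly with $\int\tilde\psi\,d\mu=\int\psi\,d\mu$ and $\cS_3(\tilde\psi)\asymp\cS_3(\psi)$. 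This avoids the bookkeeping you flagged as the main obstacle.
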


\begin{lemma}\label{div}
If $x_0h(\br)$ is closed and non-compact,
then $x_0h(s)$ diverges as $s\to \pm \infty$.
\end{lemma}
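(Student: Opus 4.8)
The plan is to argue by contradiction using the structure theory of cusps from Theorem~\ref{thm:cusps} together with Proposition~\ref{prop:divergent}. Suppose $x_0 h(\br)$ is closed and non-compact, but that $x_0 h(s)$ does \emph{not} diverge as $s\to+\infty$. Since the orbit is closed, failure to diverge means there is a sequence $s_n\to+\infty$ with $x_0 h(s_n)$ staying in a fixed compact set $\cK\subset\G\ba G$. The idea is to play this off against the geometry of the function $\norm{p\cdot}$ controlling the cusp neighborhoods via \eqref{eq:eta}.

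The key computational input is the formula $p h(s) = e^{-s/2} p$, valid for all $s\in\br$. Fix $g_0$ with $x_0 = \G g_0$, and consider the real-valued function $f(s) = \norm{p g_0 h(s)}$. Writing $p g_0 = (v_1, v_2)$ in the basis adapted so that $p h(s)$ scales, one checks directly that $f(s)^2$ is a nonnegative quadratic-exponential expression in $e^{s/2}$ and $e^{-s/2}$; more precisely, from $p g_0 h(s) = p g_0 w a(s) w^{-1}$ and the diagonal action of $a(s)$, we get $f(s)^2 = \alpha e^{s} + \beta + \gamma e^{-s}$ for constants $\alpha, \gamma \ge 0$ and $\beta\in\br$ depending on $p g_0$. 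The crucial dichotomy is: either $\alpha = 0$, in which case $f(s)\to 0$ and $x_0 h(s)$ enters the cusp $\G\ba\G\sigma_0 U H_\eta K$ (hence diverges by Theorem~\ref{thm:cusps}(2)) as $s\to+\infty$; or $\alpha > 0$, in which case $f(s)\to\infty$ as $s\to+\infty$. In the latter case I would observe that the same formula shows $f(s)\to\infty$ as $s\to +\infty$ \emph{also} in $\G\ba\G\sigma U$-coordinates relative to any cusp $\sigma\in\Sigma$, so $x_0 h(s)$ eventually leaves every cusp neighborhood and returns to a fixed compact set $\cK_\eta$ only for $s$ in a bounded set — but then, since the trajectory is a closed orbit of the one-parameter group $H$, boundedness forces $x_0 H$ to be compact, contradicting our hypothesis. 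So $\alpha = 0$, and $x_0 h(s)$ diverges as $s\to+\infty$. An entirely symmetric argument with $s\to-\infty$ handles the other end.

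Let me make the non-divergent-implies-compact step precise, since that is where the real content lies. If $x_0 h(s)$ does not diverge as $s\to+\infty$, there is $\eta>0$ and a sequence $s_n\to\infty$ with $x_0 h(s_n)\in \cK_\eta$. By compactness of $\cK_\eta$, after passing to a subsequence $x_0 h(s_n) \to y_0 \in \cK_\eta$. Now apply the quadratic-exponential description of $f$ relative to each cusp: for each $\sigma\in\Sigma$, the component of $x_0 h(s)$ in the cusp direction is governed by a function of the form $\alpha_\sigma e^s + \beta_\sigma + \gamma_\sigma e^{-s}$, and $x_0 h(s_n)\in\cK_\eta$ for all $n$ with $s_n\to\infty$ forces $\alpha_\sigma = 0$ for every $\sigma$ (otherwise the cusp component would blow up). But $\alpha_\sigma = 0$ for all $\sigma$, combined with the $\gamma_\sigma$ terms being bounded as $s\to+\infty$, shows $f_\sigma(s)$ is bounded as $s\to+\infty$ for every cusp, i.e.\ $\{x_0 h(s): s\ge 0\}$ lies in a fixed compact set $\cK_{\eta'}$. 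Then $\cl{\{x_0 h(s): s\ge 0\}}$ is a compact subset of the closed orbit $x_0 H$; combined with the corresponding statement as $s\to-\infty$ (or directly, since a closed orbit of $\br$ that is relatively compact on a half-line is either compact or... one must rule out the periodic-plus-escaping behavior, but $H\cong\br$ has no such subgroups), one concludes $x_0 H$ is compact, a contradiction.

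The main obstacle I anticipate is the bookkeeping of the "quadratic-exponential in $e^{s/2}$'' claim relative to \emph{all} the cusps simultaneously, and cleanly extracting from it that a non-divergent trajectory on a half-line is relatively compact. The cleanest route is probably to invoke Proposition~\ref{prop:divergent} in contrapositive form: if $\{x_0 h(s): s\ge 0\}$ were relatively compact then, since $x_0 H$ is closed, this set would be compact, and then $x_0 H$ — being a closed orbit containing a compact half-orbit — is compact. So really the lemma reduces to: a closed non-compact $H$-orbit cannot have a relatively compact forward (or backward) half-orbit, which is immediate once one knows that non-divergence of $x_0 h(s)$ as $s\to+\infty$ along a \emph{closed} orbit is equivalent to relative compactness of the forward half-orbit. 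That equivalence is exactly the standard fact that a divergent trajectory leaves every compact set \emph{eventually} (not just along a subsequence), which for the $H$-action follows from Theorem~\ref{thm:cusps} and the explicit monotone-after-a-point behavior of $s\mapsto\norm{p g h(s)}$ noted above. I would therefore organize the write-up around that explicit monotonicity as the single technical lemma, and deduce the statement in two lines.
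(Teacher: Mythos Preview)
Your cusp-function approach has a real gap. The function $f(s)=\norm{p\,g_0 h(s)}$ you write down measures depth in the cusp \emph{only relative to the fixed lift $g_0$}; to detect whether $x_0 h(s)\in \G\ba\G\sigma U H_\eta K$ you need $\norm{p\,\sigma^{-1}\gamma g_0 h(s)}<\eta$ for some $\gamma\in\G$, and that $\gamma$ typically changes as the geodesic passes through different fundamental domains. So there is no single ``$f_\sigma$'' per cusp with the quadratic-exponential form you describe; there are countably many, and your dichotomy on $\alpha_\sigma$ does not control the trajectory. Moreover, even in your own framework the implication is reversed: $\alpha_\sigma>0$ gives $f_\sigma(s)\to\infty$, which by \eqref{eq:eta} means the trajectory \emph{leaves} that cusp (consistent with staying in $\cK_\eta$), not that it is trapped there; so ``$x_0h(s_n)\in\cK_\eta$ forces $\alpha_\sigma=0$'' is backwards. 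For a general (diagonalizable) $H$-trajectory, non-divergence does \emph{not} imply relative compactness of the half-orbit---geodesics can oscillate in and out of cusps infinitely often---so the ``monotone-after-a-point'' claim is exactly what fails without the closedness hypothesis being used in a stronger way.

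The paper's proof bypasses all of this. It uses the standard fact that if $\G H$ is closed in $G$ then the natural map $(H\cap\G)\ba H\to\G\ba G$ is proper. Since $x_0H$ is non-compact and $H\cong\br$ has no nontrivial finite (or, here, discrete with non-compact quotient) subgroups, the stabilizer of $x_0$ in $H$ is trivial, so the orbit map $\br\to\G\ba G$, $s\mapsto x_0h(s)$, is itself proper and injective. Properness gives divergence at both ends in one line. Your final paragraph is circling this idea, but the route you propose to reach it---via Theorem~\ref{thm:cusps} and the $\norm{p\,\cdot}$-functions---does not get there; the closedness of the orbit is what you must exploit directly.
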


\begin{proof}
We use a well-known fact that for a closed subgroup $H$ of a locally
compact second countable group $G$ and a discrete subgroup $\G$ of $G$,
if $\G H$ is closed in $G$, then the canonical projection
map $H\cap \G\ba H\to \G\ba G$ is a proper map (cf. \cite{OhShahGFH}).
Since $x_0h(\br)$ is non-compact and $h(\br)$ is one-dimensional
with no non-trivial finite subgroups, the stabilizer of $x_0$ in $h(\br)$ is trivial.
 Therefore the map $h(\br)\to \G\ba G$ given by $h\to x_0h$
 is a proper injective map. This implies that $x_0h(s)$ diverges as $s\to \pm \infty$.
\end{proof}

\begin{proof}[Proof of Corollary \ref{corodiv}]
 As the set $C_c^\infty(\G\ba G)$ is dense in $C_c(\G\ba G)$, the claim follows from Lemma~\ref{div}, Theorem~\ref{thm:combine2}, and Corollary~\ref{cor:negativeside}. 
\end{proof}

\section{Counting: Proof of Theorem \ref{main}}
Let $Q$ be a real quadratic form in $3$ variables of
signature $(2,1)$ and $\G_0$ a lattice in the identity component
$G_0$ of $\SO_Q(\br)$. We assume that $v_0\G_0$ is discrete for some vector $v_0\in \br^3$
with $Q(v_0)=d>0$ and that the stabilizer $H_0$ of $v_0$ in $G_0$ is finite.

It suffices to prove Theorem \ref{main} in the case
when $Q=x^2+y^2-z^2$ and $v_0=(\sqrt d, 0,0)$ by the virtue of Witt's theorem.

Consider the spin double cover map $\iota:  G:=\SL_2(\br)\to G_0$ given by
 \begin{equation*}\begin{smallpmatrix} a&b\\c&d\end{smallpmatrix}\mapsto
 \begin{smallpmatrix}
\frac{a^2-b^2-c^2+d^2}{2}& {ac-bd} &\frac{a^2-b^2+c^2-d^2}{2}\\
{ab-cd}& {bc+ad}& {ab+cd}\\
\frac{a^2+b^2-c^2-d^2}{2}& {ac+bd} &\frac{a^2+b^2+c^2+d^2}{2}
\end{smallpmatrix}.
\end{equation*}

For $s\in \br$, we set $$h(s)=\begin{smallpmatrix} \cosh(s/2) & \sinh(s/2) \\
  \sinh(s/2) & \cosh(s/2)\end{smallpmatrix};\quad\text{and}  \quad a(s)=\begin{smallpmatrix} e^{s/2} & 0 \\
  0 & e^{-s/2}\end{smallpmatrix} .$$

Recall that $H:=\{h(s):s\in \br\}$, $A:=\{a(t):t\in \br\}$ and $K_1:=\{k(\theta):\theta\in[0,2\pi]\}$, here $K_1$ is half of the circle group.
Observing that
  $$\iota(h(s))=\begin{smallpmatrix}   1&0&0\\
  0&\cosh s &\sinh s\\ 0&\sinh s&\cosh s\end{smallpmatrix} \quad\text{ and}\quad
   \iota(a(t))=\begin{smallpmatrix}   \cosh t& 0 & \sinh t\\
  0& 1& 0\\ \sinh t&0 &\cosh t\end{smallpmatrix}, $$
 the subgroup $\tilde H:=\pm H$ is the stabilizer of
$v_0$ in $G$.
We have a generalized Cartan decomposition $G=\tilde HAK_1$ in the sense that
every $g$ is of the form $hak$ for unique $h\in \tilde H, a\in A, k\in K_1$.
 And for $g=h(s)a(t)k$, $d\mu(g)=\sinh(t)dsdtdk$
defines a Haar measure on $G$, where $dk=(1/2\pi) dk(\theta)$, and $ds$, $ dt$ and $d\theta$ are Lebesgue measures.
As $v_0G=\pm H\ba G \simeq A\times K_1$, $\sinh(t)dtdk$ defines an invariant measure
on $v_0G$. We consider the volume forms on $G$ and $v_0G$
with respect to these measures. Via the map $\iota$, these define invariant measures
on $G_0$ and $v_0G_0$ as well.

Denote by $\G$ the pre-image of $\G_0$ under $\iota$. Then $\op{Stab}_\G(v_0)=
\tilde H\cap \G=\{\pm I\}$.

For each $T>1$,
define a function on $\G\ba G$:
$$F_T(g):=\sum_{\gamma\in {\pm I}\ba \G}\chi_{B_T}(v_0 \gamma g) .$$

\begin{proposition}\label{inner} For any $\Psi\in C_c^\infty(\G\ba G)$,
 $$\la F_{ T}, \Psi\ra=\frac{T\log T\mu(\Psi) } {\op{vol}(\G\ba G)} \cdot
2\int_{K_1} \frac{1}{\|v^+k\|}dk
 +O(T)\cS_3(\psi)$$
 where
$v^{\pm}=\frac{\sqrt d}{2} (e_1\pm e_3) $.
Here the implied constant depends only on $\mathcal S_3(\Psi)$ and the support of $\Psi$.
\end{proposition}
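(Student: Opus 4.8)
The plan is to unfold the definition of $\la F_T, \Psi\ra$ using the generalized Cartan decomposition $G = \tilde H A K_1$ and reduce the problem to the divergent-translate asymptotics of Theorem \ref{thm:main}. First I would write
\[
\la F_T, \Psi\ra = \int_{\G\ba G} \sum_{\ga\in\{\pm I\}\ba\G} \chi_{B_T}(v_0\ga g)\,\Psi(g)\,d\mu(g)
= \int_G \chi_{B_T}(v_0 g)\,\Psi(g)\,dg,
\]
where the last equality uses that $\mathrm{Stab}_\G(v_0)=\tilde H\cap\G=\{\pm I\}$, so that the sum over $\{\pm I\}\ba\G$ unfolds the integral on $\G\ba G$ to an integral over $(\tilde H\cap\G)\ba G = \{\pm I\}\ba G$, and then further, formally, to $G$ after pairing with the automorphic function $\Psi$ (more precisely one rewrites this as an integral over $\{\pm I\}\ba G$ against the $\G$-periodization, which is $\Psi$ itself). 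Using $v_0 G \simeq \tilde H\ba G$ and the coordinates $g = h(s)a(t)k$ with $d\mu(g) = \sinh(t)\,ds\,dt\,dk$, the integral becomes
\[
\int_{K_1}\int_{\br}\int_{\br} \chi_{B_T}(v_0 a(t)k)\,\Psi(h(s)a(t)k)\,\sinh(t)\,ds\,dt\,dk,
\]
since $v_0 h(s) = v_0$. The constraint $v_0 a(t)k \in B_T$, i.e. $\|v_0 a(t)k\| < T$, together with $v_0 a(t) = \sqrt d(\cosh t, 0, \sinh t) = v^+ e^{t} + v^- e^{-t}$ (with $v^\pm = \tfrac{\sqrt d}{2}(e_1\pm e_3)$), shows that for large $t$ this is essentially $e^t\|v^+ k\| < T$, i.e. $t < \log T - \log\|v^+k\| + o(1)$.

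Second, I would fix $k$ and $t$ and handle the inner $s$-integral. The point is that $\int_\br \Psi(h(s)a(t)k)\,ds$ is exactly the kind of integral controlled by Theorem \ref{thm:main}: writing $x_0 = \G g_0$ for suitable $g_0$, the trajectory $x_0 h(s)$ (equivalently $\G a(t)k\,h(-s)$ after conjugation, matching the roles of $H$ and the translating torus) is divergent because $v_0 = v^+ e^0 + v^-e^0$ and $v_0 a(t) k h(s)$ — wait, one must instead observe that $s\mapsto h(s)$ fixes $v_0$, so $v_0 g$ only depends on the $A K_1$ part; the relevant divergent geodesic is the one whose forward endpoint is a parabolic fixed point, and after conjugating by $a(t)$ one is in the setting $\int_0^\infty \Psi(x_0 h(s) a(t))\,ds = t\int\Psi\,d\mu + O(1)\cS_3(\Psi)$ of Theorem \ref{thm:main}, for $t\to\infty$. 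Concretely I would substitute so the inner integral over $s$ becomes (up to the $\{\pm I\}$ and the bounded-$t$ contribution which is $O(1)$ overall) of size $t\cdot\mu(\Psi) + O(\cS_3(\Psi))$, valid for $t$ outside a bounded set; the small-$t$ and negative-$t$ ranges contribute $O(1)$ to $\la F_T,\Psi\ra$ because there $\chi_{B_T}\equiv 1$ on a bounded $t$-range and $\Psi$ has bounded integral.

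Third, I would carry out the remaining $t$ and $k$ integrations. Plugging in, for each $k\in K_1$,
\[
\int_{|t|\le \log T - \log\|v^+k\| + O(1)} \bigl(t\,\mu(\Psi) + O(\cS_3(\Psi))\bigr)\sinh(t)\,dt.
\]
Since $\sinh t \sim \tfrac12 e^t$ and $\int^{L} t\,e^t\,dt = (L-1)e^L + O(1)$ with $L = \log T - \log\|v^+k\|$, the main term is $\tfrac12 \mu(\Psi)\,(\log T)\,\tfrac{T}{\|v^+k\|} + O(T)$, while the error term $O(\cS_3(\Psi))\int^L \sinh t\,dt = O(\cS_3(\Psi))\,\tfrac{T}{2\|v^+k\|} = O(T)\cS_3(\Psi)$. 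Integrating over $k\in K_1$ (with $dk = \tfrac{1}{2\pi}d\theta$, but $K_1$ being half the circle the normalization is absorbed into the stated $2\int_{K_1}\tfrac{1}{\|v^+k\|}dk$) and recalling that the global unfolding on $\G\ba G$ of covolume $\op{vol}(\G\ba G)$ contributes the factor $1/\op{vol}(\G\ba G)$ — no, more carefully: we did not divide by covolume in the unfolding, so the $\mu$ above is the probability measure and the covolume factor enters because $\mu(\Psi) = \tfrac{1}{\op{vol}(\G\ba G)}\int_{\G\ba G}\Psi\,d(\text{Haar})$; tracking this bookkeeping gives the claimed
\[
\la F_T,\Psi\ra = \frac{T\log T\,\mu(\Psi)}{\op{vol}(\G\ba G)}\cdot 2\int_{K_1}\frac{1}{\|v^+k\|}\,dk + O(T)\cS_3(\Psi).
\]

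The main obstacle is the uniformity of the error term as $k$ ranges over $K_1$ and as $t$ ranges near the truncation $L(k)$: Theorem \ref{thm:main} gives an $O(1)$ with implied constant depending on the compact set $\cK\supset\op{supp}\Psi$ and on $\cS_3(\Psi)$, and one must check this is genuinely uniform in $t$ (the parameter playing the role of "$T$" there) and in the base point $a(t)k$ of the geodesic, as well as control the transition region where $\|v_0 a(t) k\|$ crosses $T$ — there the sharp cutoff $\chi_{B_T}$ must be compared with a smoothing to apply the smooth-function estimate, contributing an additional $O(T)\cS_3(\Psi)$ from a boundary layer of $t$-width $O(1)$. I would handle this either by a direct $\|v_0 a(t)k\| = e^t\|v^+k\|(1 + O(e^{-2t}))$ expansion to absorb the error into the stated bound, or by sandwiching $\chi_{B_T}$ between two smooth functions, exactly as in the passage from $\tilde N_T$ to $N_T$ elsewhere in the paper.
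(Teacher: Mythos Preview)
Your overall strategy---unfold $\langle F_T,\Psi\rangle$ via the Cartan decomposition $G=\tilde H A K_1$, apply the divergent-translate estimate (Theorem~\ref{mct2}) to the inner $s$-integral, then integrate in $t$ and $k$---is exactly the paper's approach. Two specific points, however, are mishandled and together account for the factor $2$ in the statement.

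First, the full inner integral $\int_{\R}\Psi(h(s)a(t)k)\,ds$ equals $2|t|\,\mu(\Psi)+O(1)\cS_3(\Psi)$, not $t\,\mu(\Psi)$: since $v_0\Gamma$ is discrete and $\tilde H\cap\Gamma=\{\pm I\}$, the orbit $\Gamma\backslash\Gamma H$ is closed and non-compact, so by Lemma~\ref{div} the geodesic $\Gamma h(s)$ diverges in \emph{both} directions $s\to\pm\infty$, and one must add the contributions of Theorem~\ref{mct2} and Corollary~\ref{cor:negativeside}. (There is no conjugation trick needed here; one applies the theorem directly to $g\mapsto\Psi(gk)$ with base point $x_0=\Gamma e$.)

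Second, and more seriously, your assertion that ``the small-$t$ and negative-$t$ ranges contribute $O(1)$'' is false. For $t\ll -1$ one has $v_0a(t)=e^{t}v^++e^{-t}v^-$, so the constraint $\|v_0a(t)k\|<T$ becomes $e^{-t}\|v^-k\|<T+O(1)$, and the $t<0$ integral produces a second main term $\frac{T\log T\,\mu(\Psi)}{\op{vol}(\Gamma\backslash G)\|v^-k\|}$. The factor $2$ in the Proposition then comes from the identity
\[
\int_{K_1}\frac{dk}{\|v^-k\|}=\int_{K_1}\frac{dk}{\|v^+k(\pi)k\|}=\int_{K_1}\frac{dk}{\|v^+k\|},
\]
using $v^-k(\pi)=-v^+$. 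It is \emph{not} a $K_1$-normalization artifact as you suggest. Once you correct these two points the bookkeeping closes: the $2$ from the $s$-integral cancels the $\tfrac12$ in $\sinh t\sim\tfrac12 e^{|t|}$, and the $2$ in the final formula is the sum over the two signs of $t$.

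Your remarks on uniformity in $k$ are fine: the constant in Theorem~\ref{mct2} depends only on $\op{supp}\Psi$ and $\cS_3(\Psi)$, and right translation by $k\in K_1$ moves the support inside a fixed compact set and changes $\cS_3$ by at most a bounded factor. The boundary layer in $t$ near $\|v_0a(t)k\|=T$ has width $O(1)$ and integrand $O(T)$, so it is absorbed in the stated $O(T)$ error without any smoothing.
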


\begin{proof}
Then $v_0= v^++v^-$ and $v_0 a(t)= e^tv^++e^{-t}v^-$.
Since $B_T=\{ v_0a(t)k: \|v_0a(t)k\|<T\,t\in\R,\,k\in K_1\}$, we have
\begin{align*} 
&\la F_{ T}, \Psi\ra =\int_{\G\ba G}\sum_{\gamma\in {\pm I}\ba \G}\chi_{B_T}(v_0\gamma g)
\Psi(g)d\mu(g)\\ & =
  \int_{k\in K_1} \int_{\|v_0a(t) k\|< T }
   \left( \int_{h(s) \in \pm I \ba \tilde H }\Psi(h(s) a(t) k)ds\right) \sinh(t) dt dk \\
   &= \int_{k\in K_1}\int_{\|v_0a(t)k\|< T }
   \left( \int_{s\in \br }\Psi (h(s)a(t) k)ds\right)  \sinh(t) dt dk .\end{align*}

 Since $v_0\G$ is discrete and $H\cap \G$ is trivial, it follows that
  $\G\ba \G H$ is closed and non-compact in $\G\ba G$. Now fix any $k\in K_1$. 
  Hence by Theorem \ref{mct2} and Lemma \ref{div},
 \begin{align*}\label{coo} 
& \int_{t\gg 1, \|v_0a(t)k\|< T}\left( \int_{s\in \br }\Psi(h(s)a(t) k)ds\right)   \sinh(t)  dt\\
  &=  \frac{1}{\op{vol}(\G\ba G)} \int_{t\gg 1, e^t \|v^+ k \|< T +O(1) }
  (2t \mu(\psi) +O(1)\cS_3(\psi)) (e^t/2+O(1))   dt\\
 & = \frac{T\log T \mu(\Psi)}{\op{vol}(\G\ba G)\cdot \|v^+k\|} +O(T)\cS_3(\psi).
 \end{align*}

  Similarly,
   \begin{align*} & \int_{t\ll -1,
    \|v_0a(t)k\|< T}\left( \int_{s\in \br }\Psi(h(s)a(t) k)ds\right)
      \sinh(t) dt\\&=\int_{t\gg 1,
    \|v_0a(-t)k\|< T}\left( \int_{s\in \br }\Psi(h(s)a(-t) k)ds\right)
      \sinh(t) dt
      \\  & =\frac{1}{\op{vol}(\G\ba G)} \int_{t\gg 1, e^t \|v^- k \|< T + O(1) }
  (2t\mu(\psi) +O(1)\cS_3(\psi)) (e^t/2+O(1))   dt\\
 &= \frac{T\log T\mu(\Psi) }{\op{vol}(\G\ba G) \|v^-k\|}  +O(T)\cS_3(\psi). \end{align*}
Since $v^-k(\pi)=-v^+$,
\[
\int_{k\in K_1} \norm{v^-k}\inv dk = \int_{k\in K_1} \norm{v^+k(\pi)k}\inv dk=\int_{K_1}\norm{v^+k}\inv dk.
\]
The required formula can be deduced in a straightforward manner from this. 
\end{proof}

Fix a non-negative function $\psi\in C_c^\infty(G)$ whose support injects to $\G\ba G$  and
with integral $\int \psi(g)\;d\mu(g)=1$.
Consider a function $\xi_T$ on $\br^3$ defined by
$$\xi_T(x)=\int_{g\in G}\chi_{B_T}(xg)\psi(g) d\mu(g) .$$
Then
the sum $\sum_{\gamma\in \pm I\ba \G}\xi_T(v_0\gamma)$ is a smoothed over counting
satisfying 
$$\sum_{\gamma\in \pm I\ba \G}\xi_T(v_0\gamma)\asymp \#v_0\G\cap B_T .$$

\begin{theorem} As $T\to \infty$,
$$\sum_{\gamma\in \pm I\ba \G}\xi_T(v_0 \gamma) =
\frac{2 T\log T } {\op{vol}(\G\ba G)} \cdot \int_{k\in K_1} \frac{1}{\|v^+k\|}dk
+O( T)\cS_3(\psi).$$
 \end{theorem}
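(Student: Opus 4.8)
The plan is to transfer the inner-product estimate of Proposition~\ref{inner} from test functions $\Psi\in C_c^\infty(\G\ba G)$ to the smoothed counting function $\sum_{\gamma}\xi_T(v_0\gamma)$ by a standard unfolding-and-unwinding argument. First I would observe that the function
$$
G_T(g):=\sum_{\gamma\in \pm I\ba \G}\xi_T(v_0\gamma g)
$$
is a $\G$-invariant function on $G$ (hence descends to $\G\ba G$), and that by Fubini
$$
G_T(g)=\int_{h\in G}\Big(\sum_{\gamma\in \pm I\ba\G}\chi_{B_T}(v_0\gamma h)\Big)\psi(h g\inv)\,d\mu(h)=(F_T * \check\psi)(g),
$$
where $\check\psi(h)=\psi(h\inv)$; in particular $\sum_{\gamma}\xi_T(v_0\gamma)=G_T(e)$. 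Evaluating at $e$ and using that $\psi$ is supported on a set injecting into $\G\ba G$, this equals $\la F_T,\check\psi\ra_{\G\ba G}$ up to the elementary bookkeeping of passing between $G$ and $\G\ba G$; here $\int\check\psi\,d\mu=1$ and $\cS_3(\check\psi)=\cS_3(\psi)$.

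Next I would apply Proposition~\ref{inner} with $\Psi=\check\psi$ to get
$$
\sum_{\gamma\in\pm I\ba\G}\xi_T(v_0\gamma)=\la F_T,\check\psi\ra=\frac{T\log T}{\op{vol}(\G\ba G)}\cdot 2\int_{K_1}\frac{1}{\|v^+k\|}\,dk+O(T)\cS_3(\psi),
$$
which is exactly the claimed asymptotic. The only points needing care are: (i) checking that the support hypothesis on $\psi$ lets one identify the integral of the periodization of $\xi_T(v_0\cdot)\psi$ over $G$ with the pairing over $\G\ba G$ without double counting — this is where one uses that $\op{supp}(\psi)$ injects — and (ii) verifying that replacing $\psi$ by $\check\psi$ (which is forced by the convolution on the correct side) does not change the Sobolev-norm bound, which holds since $g\mapsto g\inv$ is a diffeomorphism preserving Haar measure and conjugates the left-invariant vector fields to right-invariant ones of the same norm.

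The main (mild) obstacle is purely organizational: one must be scrupulous about which side the group acts on in the definitions of $B_T$, $\chi_{B_T}*\psi$, $F_T$, and $\xi_T$, so that the convolution identity $G_T=F_T*\check\psi$ comes out with the variables on the correct side and the periodization over $\pm I\ba\G$ matches the sum defining $F_T$. Once the bookkeeping is set up correctly, the statement is an immediate consequence of Proposition~\ref{inner}; no new analytic input is required beyond what has already been proved. Finally, since $\tilde N_T\asymp \#(v_0\G\cap B_T)$ and the leading term here is $c\cdot T\log T$ with $c=\lim_{T\to\infty}\frac{2\op{vol}_{H\ba G}(B_T)}{T\op{vol}_G(\G\ba G)}=\frac{2}{\op{vol}(\G\ba G)}\int_{K_1}\|v^+k\|\inv\,dk$ (the last equality being the computation of $\op{vol}_{H\ba G}(B_T)$ in the coordinates $v_0a(t)k$ used in the proof of Proposition~\ref{inner}), this also yields parts (1) and (2) of Theorem~\ref{main}.
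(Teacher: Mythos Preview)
Your approach is the same as the paper's: unfold the sum, recognize it as $\langle F_T,\Psi\rangle$ for a suitable bump $\Psi$ on $\G\ba G$, and invoke Proposition~\ref{inner}. One small slip: evaluating your convolution at $e$ gives $\int_G F_T(h)\psi(h)\,dh$, which is $\langle F_T,\Psi\rangle$ with $\Psi$ the periodization of $\psi$ itself (not of $\check\psi$); the detour through $\check\psi$ is unnecessary, though harmless since only $\int\Psi=1$ and $\cS_3(\Psi)$ enter the conclusion.
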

\begin{proof} It is not hard to verify that
$$\sum_{\gamma\in \pm I\ba \G}\xi_T(v_0\gamma)=\la F_T, \Psi\ra $$
  where $\Psi(\G g)=\sum_{\gamma\in \G}\psi(\gamma g)$.
Therefore the claim follows from Proposition \ref{inner}.
\end{proof}

\begin{theorem}\label{mctwo} For $T\gg 1$, we have
$$\#\{w\in v_0\G: \|w\|<T\}=
  \frac{2 T\log T }{\op{vol}(\G\ba G)} \int_{K_1}\frac{1}{ \|w^+k\|} dk (1+(\log T)^{-\alpha})$$
where $\alpha=-1/5.5$. 
\end{theorem}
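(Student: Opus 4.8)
The plan is to deduce Theorem \ref{mctwo} from the smoothed count of the preceding theorem by the standard comparison-of-counting-functions argument, carefully tracking the error term through the $\psi$-thickening. First I would parametrize the smoothing by a scale: instead of a fixed $\psi\in C_c^\infty(G)$, I fix one bump $\psi_1$ supported in the unit neighborhood of the identity with $\int\psi_1\,d\mu=1$, and for $0<\e<\e_0$ set $\psi_\e(g)=\e^{-\dim G}\psi_1(g/\e)$ (defined via the exponential chart), so that $\psi_\e$ is supported in $\cO_\e$, has integral one, and $\cS_3(\psi_\e)=O(\e^{-3-\dim G})=O(\e^{-6})$. Applying the previous theorem with $\psi=\psi_\e$ gives
\begin{equation*}
\sum_{\gamma\in\pm I\ba\G}\xi_{T,\e}(v_0\gamma)=\frac{2T\log T}{\op{vol}(\G\ba G)}\int_{K_1}\frac{dk}{\|v^+k\|}+O(T\e^{-6}),
\end{equation*}
where $\xi_{T,\e}(x)=\int_G\chi_{B_T}(xg)\psi_\e(g)\,d\mu(g)$ and the implied constant is now absolute (it depends only on the fixed $\psi_1$ and on $\cK$, and the relevant $\cK$ can be taken uniform once $\e<\e_0$).

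Next I would sandwich the sharp count $N_T:=\#\{w\in v_0\G:\|w\|<T\}$ between two smoothed counts. Because $\psi_\e$ is supported in $\cO_\e$, for any $x\in v_0G$ one has $\chi_{B_{T-c\e T}}(x)\le \xi_{T,\e}(x)\le \chi_{B_{T+c\e T}}(x)$ for a constant $c$ depending only on the norm and on how $A$-translation distorts $\|\cdot\|$ near scale $T$: indeed if $\|g-I\|\le\e$ then $\|xg\|=\|x\|(1+O(\e))$ uniformly on $v_0G\cap B_{2T}$. Summing over $v_0\G$ and using the displayed asymptotic at the shifted parameters $T\pm c\e T$ gives
\begin{equation*}
N_{T-c\e T}\le \frac{2(T+c\e T)\log(T+c\e T)}{\op{vol}(\G\ba G)}\int_{K_1}\frac{dk}{\|v^+k\|}+O(T\e^{-6}),
\end{equation*}
and the reverse inequality for $N_{T+c\e T}$. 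The main term is $C_0 T\log T$ with $C_0=\frac{2}{\op{vol}(\G\ba G)}\int_{K_1}\|v^+k\|^{-1}\,dk$; replacing $T$ by $T(1\pm c\e)$ changes it by $O(\e T\log T)$. Combining, $N_T=C_0T\log T+O(\e T\log T)+O(T\e^{-6})$. Optimizing, the two error terms balance when $\e T\log T\asymp T\e^{-6}$, i.e. $\e\asymp(\log T)^{-1/7}$, giving $N_T=C_0 T\log T+O(T(\log T)^{6/7})$; absorbing the power of $\log T$ into the stated form yields $N_T=C_0T\log T\,(1+O((\log T)^{-1/7}))$, which is better than the claimed exponent $-1/5.5$ (so the stated estimate certainly holds — presumably the authors use a cruder Sobolev bound $\cS_3(\psi_\e)=O(\e^{-m})$ for a larger $m$, which I would track honestly and which only changes the constant $1/7$ to some other explicit value, here $\le 1/5.5$).

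The only genuine subtlety, and the step I expect to be the main obstacle, is the uniformity of the implied constant in $\langle F_T,\Psi\rangle$ as $\e\to0$: Proposition \ref{inner} and Theorem \ref{thm:main}/\ref{mct2} give an $O(1)\cS_3(\psi)$ error whose constant depends on the compact set $\cK$ containing $\supp\Psi$. When $\Psi=\Psi_\e$ is built from $\psi_\e$, the projected support $\supp\Psi_\e\subset\G\ba G$ shrinks rather than grows as $\e\to0$, so it is contained in a fixed compact set $\cK_0$ (a small neighborhood of the projection of $\supp\psi_1$) for all $\e<\e_0$; hence the constant is genuinely independent of $\e$, and the argument closes. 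I would spell this out carefully — checking that the injectivity-radius hypothesis on $\supp\psi_\e$ is met for $\e<\e_0(\cK_0)$ and that the $t\gg1$ truncation in Proposition \ref{inner} introduces the claimed $O(T)$ (not $O(T)\cS_3$) tail, since the contribution of bounded $t$ is trivially $O(T)$ — after which the sandwich and optimization are routine.
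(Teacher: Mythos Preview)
Your approach is essentially identical to the paper's: introduce a bump $\psi_\e$ at scale $\e$, sandwich the sharp count $F_T(I)$ between smoothed counts at perturbed radii via the inclusion $\cO_{\ell\e}B_T\subset B_{(1+\e)T}$, apply Proposition~\ref{inner} (equivalently, the preceding theorem, since $\sum_\gamma\xi_T(v_0\gamma)=\langle F_T,\Psi\rangle$), and optimize over $\e$. Your remark about uniformity of the implied constant---that $\supp\Psi_\e$ shrinks into a fixed compact set---is correct and is implicitly used in the paper as well.

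The only substantive discrepancy is numerical, and here you have two slips pointing in the wrong direction. First, the Sobolev norm $\cS_3$ defined in the paper is $L^2$-based: a bump at scale $\e$ on the $3$-dimensional group $G$ has $\|\psi_\e\|_2\asymp\e^{-3/2}$, and each derivative costs $\e^{-1}$, so $\cS_3(\psi_\e)=O(\e^{-4.5})$, not your $O(\e^{-6})$ (which would be the $C^3$ norm). This is exactly the bound the paper uses, and it is \emph{sharper} than yours, not cruder. Second, and consequently, your optimized exponent $1/7$ is \emph{worse} than $1/5.5$: since $1/7<1/5.5$, one has $(\log T)^{-1/7}>(\log T)^{-1/5.5}$, so your error term is larger and does not imply the stated estimate. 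Replacing your $\e^{-6}$ by the correct $\e^{-4.5}$ and re-optimizing (balance $\e T\log T\asymp T\e^{-4.5}$, i.e.\ $\e=(\log T)^{-1/5.5}$) recovers the paper's exponent exactly.
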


\begin{proof}
Note that $F_T(I)=\#\{w\in v_0\G: \|w\|<T\}$.
For each $\e>0$,
let $\cO_\e=\{g\in G:\|g-I\|_\infty\le \e\}$.
There exists $0< \ell \le 1 $ such that
for all small $\e>0$,\begin{equation}
\label{oe} \cO_{\ell \e} B_T\subset B_{(1+\e)T},\quad B_{(1-\e)T}\subset \cap_{u\in \cO_{\ell \e} }u B_T .\end{equation}
Let $\psi^\e$ be a non-negative smooth function on $G$
 supported in $\cO_{\ell \e}$ and
with integral $\int \psi^\e d\mu=1$ and define
$\Psi^\e\in C_c^\infty(\G\ba G)$ by
$\Psi^\e(\G g):=\sum_{\gamma\in \G}\psi^\e(\gamma g)$.

Using \eqref{oe}, we have
$$\la F_{(1-\e)T}, \Psi^\e\ra \le F_T(I)\le
\la F_{(1+\e)T}, \Psi^\e\ra .$$

Therefore by Proposition~\ref{inner}
\begin{align*} 
\la F_{(1\pm \e) T}, \Psi^\e\ra &= \frac{2 T\log T }{\op{vol}(\G\ba G)} \int_{K_1}\frac{1}{ \|w^+k\|} dk
+O(\e T\log T) + O(\mathcal S_3 (\Psi^\e) T)\\
&= \frac{2 T\log T }{\op{vol}(\G\ba G)} \int_{K_1}\frac{1}{ \|w^+k\|} dk (1+(\log T)^{-1/5.5}, \end{align*}
where the last equality follows because $\mathcal S_3 (\Psi^\e)=O(\e^{-4.5})$, and if we put $\epsilon=(\log T)^{-1/5.5}$ then 
\[
O(\mathcal S_3(\Psi^\e)T)=O(\epsilon T\log T)= (T\log T)(\log T)^{-1/5.5}.
\]
\end{proof}

\begin{proof}[Proof of Theorem~\ref{main}]
The above computation  in the proof of Proposition~\ref{inner} also shows that
\begin{equation} 
\op{vol}(B_T)=\int_{k\in K_1} \int_{\|v_0a(t)k\|<T }
  \sinh(t)  dt dk 
=T \int_{k\in K}\frac{1}{\|v^+k\|} dk +O(\log T).
\end{equation}
From Theorem~\ref{mctwo}, it follows that
\begin{equation}\label{f2}
F_T(I)= \frac{2\log T \op{vol}(B_T)} {\op{vol}(\G\ba G)} (1+O(\log T)^{-\alpha})). \end{equation}
Since $F_T(I)=\#(v_0\G\cap B_T)$, this completes the proof.
\end{proof}

\section{Orbital counting for general representations of $\SL_2(\R)$}  \label{sec:SL2-gen-rep}

Let $G=\SL_2(\R)$ and $\Gamma$ be a non-uniform lattice in $G$. For
$s\in\R$, define
\begin{equation*} 
h(s)=\Bigl[\begin{smallmatrix} \cosh(s/2) & \sinh(s/2) \\
  \sinh(s/2) & \cosh(s/2)\end{smallmatrix} \Bigr], \ 
a(s)=\Bigl[\begin{smallmatrix} e^{s/2} & 0 \\
 0 & e^{-s/2}\end{smallmatrix}\Bigr], \  k(\theta)=\Bigl[\begin{smallmatrix} \cos(\theta/2) & -\sin(\theta/2) \\ \sin(\theta/2) & \cos(\theta/2)\end{smallmatrix}\Bigr]
\end{equation*}
Put $H=\{h(s):s\in\R\}$, $A^+=\{a(t):t>0\}$, and $K_1=\{k(\theta):\theta\in [0,2\pi]\}$, here $K_1$ is half of the circle group. Put $w_0=k(\pi)$. Then $\{\pm I\}\bs G=HA^+K_1\cup Hw_0A^+K_1$, $w_0\inv h(s)w_0 = h(-s)$ and $w_0\inv a(t)w_0= a(-t)$.

Let $V$ be any finite dimensional representation of $G$ and $v_0\in G$ be such that $H$ is the stabilizer subgroup of $v_0$ in $G$, i.e.,
 $H=G_{v_0}$ where $G_{v_0}=\{g\in G: v_0g=v_0\}$.
Assume that $V$ is linearly spanned by $v_0G$. Then if $e^{mt}$ is the highest eigenvalue for $a(t)$-action on $V$, then $m\in \N$, and the $G$ action factors through $\{\pm I\}\bs G=\PSL_2(\R)\cong \SO(2,1)^0$. 

For example, let $V_m$ denote the $(2m+1)$-dimensional space of real homogeneous polynomials of degree $2m$ in two variables, and consider the standard right action of $g\in \SL(2,\R)$ on $P(x,y) \in V_m$ by $(Pg)(x,y)=P((x,y)g)$, where $(x,y)\bigl[\begin{smallmatrix} a & b \\ c & d \end{smallmatrix}\bigr]=(ax+cy,bx+dy)$. Let $v_0(x,y)=(x^2-y^2)^m$. Then $G_{v_0}=H\cW$, where $\cW=\{\pm I\}$ if $m$ is odd and $\cW=\{\pm I, \pm w_0\}$ if $m$ is even. Moreover, $\{P\in V_m:Ph=P,\,\text{ for all }h\in H\}=\R P_0$. A general finite dimensional representation of $G$ with a nonzero $H$-fixed vector is a direct sum of such irreducible representations, and $v_0$ is a sum of one nonzero $H$-fixed vector from each of the irreducible representations; we assume that $V$ is a span of $v_0G$.

\begin{theorem} \label{thm:gen-count}
Let $V$, $v_0$ and $m$ be as above. Suppose that $\Gamma$ is a lattice in $G$, $v_0\Gamma$ is discrete, and $\Gamma_{v_0}:=\Gamma\cap G_{v_0}$ is finite. Let $\norm{\cdot}$ be any norm on $V$, and $v_0^+=\lim_{t\to\infty} v_0a_t/\norm{v_0 a_t}$. Let $C$ be an open subset of $\{v\in V:\norm{v}=1\}$ such that $\Theta=\{\theta\in[0,2\pi]: v_0^+k(\theta)\in \R C\}$
has positive Lebesgue measure, and $\{\theta\in [0,2\pi]: v_0^+ k(\theta)\in \R (\cl{C}\setminus C)\}$ has zero Lebesgue measure. Then for $T\gg 1$,  
\begin{align}
\# (v_0\Gamma &\cap [0,T]C )\\
&=\frac{4 (2\pi)\inv \int_{\Theta}\norm{v_0^+ k(\theta)}^{-1/m}\,d\theta} {\abs{\Gamma_{v_0}}\cdot \vol_G(\Gamma\bs G)}\times \frac{\log T}{m}T^{1/m} (1+(\log T)^{-\alpha})
\nonumber
\end{align}
where $\vol_G$ is given by the Haar integral $dg=\sinh(t)dtdsd\theta$ on $G$, where $g=h(s)a(t)k(\theta)$, and $\alpha=\frac{1}{5.5}$.                    

Moreover, if $C\subset V$ satisfies $\R\cl{C}\cap v_0^+K_1=\emptyset$, then $\# (v_0\Gamma\cap \R C )<\infty$. 
\end{theorem}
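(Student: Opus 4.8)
The plan is to reduce Theorem~\ref{thm:gen-count} to the already-established case of the hyperboloid $\SO(2,1)$, i.e.\ to Theorem~\ref{mctwo} and Proposition~\ref{inner}, by exploiting the fact that the $G$-action on $V$ factors through $\PSL_2(\R)\cong\SO(2,1)^0$. First I would set up the generalized Cartan decomposition $\{\pm I\}\bs G=HA^+K_1\cup Hw_0A^+K_1$ exactly as in the statement, together with the Haar measure $dg=\sinh(t)\,dt\,ds\,d\theta$, and write $v_0a(t)=e^{mt}v^+ + (\text{lower order in }e^t)$, where $v^+$ spans the highest weight line; thus $\norm{v_0a(t)k(\theta)}=e^{mt}\norm{v_0^+k(\theta)}(1+o(1))$ as $t\to+\infty$, and symmetrically on the $w_0A^+$ chamber using $w_0\inv a(t)w_0=a(-t)$. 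This is the analogue of the decomposition $v_0=v^++v^-$, $v_0a(t)=e^tv^++e^{-t}v^-$ used in the proof of Proposition~\ref{inner}, with the exponent $1$ replaced by $m$.

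Next I would run the inner-product computation verbatim as in Proposition~\ref{inner}: for $\Psi\in C_c^\infty(\G\bs G)$ and $F_T(g)=\sum_{\gamma\in\G_{v_0}\bs\G}\chi_{[0,T]C}(v_0\gamma g)$, unfold $\la F_T,\Psi\ra$ over the Cartan decomposition, use that $\G\bs\G H$ is closed and non-compact (which follows from discreteness of $v_0\G$ and finiteness of $\G_{v_0}$, via Lemma~\ref{div} applied to $H\cap\G$, exactly as before), and apply Theorem~\ref{mct2} to the inner $H$-integral $\int_{s\in\R}\Psi(h(s)a(\pm t)k)\,ds=2\abs{t}\mu(\Psi)+O(1)\cS_3(\Psi)$. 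The radial condition $\norm{v_0a(t)k(\theta)}<T$ becomes $e^{mt}<T\norm{v_0^+k(\theta)}\inv(1+o(1))$, i.e.\ $t<\tfrac1m(\log T-\log\norm{v_0^+k(\theta)})+o(1)$, and the Jacobian $\sinh(t)\sim e^t/2$; integrating $\int^{(\log T)/m} 2t\cdot e^t/2\,dt$ no longer holds since the exponential $e^t$ does not match the radial exponent $e^{mt}$. Here is the one genuinely new point compared to $m=1$: the substitution $u=e^{mt}$, so that $e^t\,dt=\tfrac1m u^{1/m-1}\,du$ and $t=\tfrac1m\log u$, converts the inner $t$-integral into $\int_1^{\,T/\norm{v_0^+k(\theta)}}\tfrac{\log u}{m}\cdot\tfrac1m u^{1/m-1}\,du\sim \tfrac{\log T}{m}\cdot\tfrac1m\cdot\tfrac{m}{1}\cdot\bigl(T/\norm{v_0^+k(\theta)}\bigr)^{1/m}=\tfrac{\log T}{m}T^{1/m}\norm{v_0^+k(\theta)}^{-1/m}$. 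Summing the two chambers and integrating $\tfrac1{2\pi}\,d\theta$ over $\Theta$, with the boundary hypothesis on $\cl C\setminus C$ guaranteeing the indicator of $\Theta$ causes no error at the ends, gives the main term with the constant $4(2\pi)\inv\int_\Theta\norm{v_0^+k(\theta)}^{-1/m}\,d\theta$; the factor $\abs{\G_{v_0}}$ enters because the period of $H\cap\G\bs H$ against the discrete orbit is normalized by $\abs{\G_{v_0}}$, and the factor $4$ absorbs the two chambers and the identification $v^-k(\pi)=-v^+$ as in the original proof.

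To pass from the smoothed estimate $\la F_T,\Psi\ra$ to the sharp count $F_T(I)=\#(v_0\G\cap[0,T]C)$ with the stated error term, I would imitate the proof of Theorem~\ref{mctwo}: choose $\psi^\e$ supported in $\cO_{\ell\e}$ with $\int\psi^\e\,d\mu=1$, note $\cO_{\ell\e}[0,T]C\subset[0,(1+\e)T]C'$ for a slightly enlarged cone and $[0,(1-\e)T]C''\subset\bigcap_{u\in\cO_{\ell\e}}u\,[0,T]C$, sandwich $\la F_{(1-\e)T},\Psi^\e\ra\le F_T(I)\le\la F_{(1+\e)T},\Psi^\e\ra$, and optimize $\e=(\log T)^{-1/5.5}$ against $\cS_3(\Psi^\e)=O(\e^{-4.5})$; the comparison of the enlarged/shrunk cones with $[0,T]C$ uses precisely the positive-measure and null-boundary hypotheses on $\Theta$ so that the difference of main terms is $O(\e\cdot T^{1/m}\log T)$. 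The one place requiring care, and the main obstacle, is making the ``$1+o(1)$'' in $\norm{v_0a(t)k(\theta)}=e^{mt}\norm{v_0^+k(\theta)}(1+o(1))$ effective and uniform in $\theta$: one must bound the correction by $O(e^{-2t})$ (the gap between the top weight $m$ and the next weight $m-1$, or $m-2$ after accounting for which weight spaces actually occur in $v_0G$) uniformly on the compact $K_1$, so that the radial cutoff $t<\tfrac1m\log T+O(e^{-2t})$ contributes only an $O(T^{(1/m)-\delta})$ error to the count; this is routine but must be stated carefully so the logarithm is not disturbed. Finally, the last assertion is immediate: if $\R\cl C\cap v_0^+K_1=\emptyset$, then by compactness $\norm{v_0a(t)k(\theta)}$ grows like $e^{mt}$ times a quantity bounded away from the directions in $C$ only for bounded $t$; more precisely $v_0a(t)k(\theta)/\norm{v_0a(t)k(\theta)}\to v_0^+k(\theta)$ uniformly, which stays outside a neighborhood of $\cl C$, so $v_0\G\cap\R C$ meets only the part of the orbit with $\abs{t}$ bounded, and this is finite because $\G\bs\G H$ is proper in $\G\bs G$ (Lemma~\ref{div}) so only finitely many $\G_{v_0}$-cosets of $\G$ can land in any fixed compact piece.
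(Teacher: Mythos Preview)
Your proposal is correct and follows essentially the same approach as the paper, which simply states that the result is deduced by the arguments of Theorem~\ref{mctwo} together with the highest-weight ideas of \cite{OhShahGFH}; you have written out precisely this adaptation in detail, including the key substitution $u=e^{mt}$ that accounts for the exponent $m$, the uniform approximation $\norm{v_0a(t)k(\theta)}=e^{mt}\norm{v_0^+k(\theta)}(1+O(e^{-2t}))$, and the $\epsilon$-optimization from the proof of Theorem~\ref{mctwo}.
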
 
\begin{proof}
The result can be deduced by the arguments as in the proof of Theorem~\ref{mctwo}; one may also use the basic ideas from \cite{OhShahGFH} about using the highest weight. 
\end{proof}

\end{document}